\documentclass[12pt,a4paper]{amsart}
\usepackage[width=0.75\paperwidth,height=0.85\paperheight,twoside=false,includehead,includefoot]{geometry}
\allowdisplaybreaks[1]

\DeclareFontEncoding{OT2}{}{}
\DeclareFontSubstitution{OT2}{cmr}{m}{n}

\DeclareFontFamily{OT2}{cmr}{\hyphenchar\font45 }
\DeclareFontShape{OT2}{cmr}{m}{n}{%
   <5><6><7><8><9>gen*wncyr%
   <10><10.95><12><14.4><17.28><20.74><24.88>wncyr10}{}

\DeclareMathAlphabet{\mathcyr}{OT2}{cmr}{m}{n}
\DeclareMathAlphabet{\mathcyb}{OT2}{cmr}{b}{n}
\SetMathAlphabet{\mathcyr}{bold}{OT2}{cmr}{b}{n}

\usepackage{amsopn}

\usepackage{amstext}
\usepackage{amsthm}
\usepackage{amssymb}

\newtheorem{thm}{Theorem}[section]
\newtheorem{lem}[thm]{Lemma}
\newtheorem{prop}[thm]{Proposition}

\theoremstyle{definition}

\theoremstyle{remark}
\newtheorem{rem}[thm]{Remark}

\begin{document}

\title{On the $a$-points of symmetric sum of multiple zeta function}

\author{Hideki Murahara}
\address[Hideki Murahara]{Nakamura Gakuen University Graduate School, 5-7-1, Befu, Jonan-ku,
Fukuoka, 814-0198, Japan}
\email{hmurahara@nakamura-u.ac.jp}

\author{Tomokazu Onozuka}
\address[Tomokazu Onozuka]{Institute of Mathematics for Industry, Kyushu University 744, Motooka, Nishi-ku,
Fukuoka, 819-0395, Japan}
\email{t-onozuka@imi.kyushu-u.ac.jp}

\subjclass[2010]{Primary 11M32}
\keywords{Multiple zeta function, the Riemann zeta function, the Riemann-von Mangoldt formula, a-point.}

\begin{abstract}
 In this paper, we present some results on the $a$-points of the symmetric sum of the Euler-Zagier multiple zeta function. 
 Our first three results are for the $a$-points free region of the function. 
 The fourth result is the Riemann-von Mangoldt type formula. 
 In the last two results, we study the real parts of $a$-points of the function. 
\end{abstract}

\maketitle

\section{Introduction}
With regard to the Riemann hypothesis, research on non-trivial zeros is being actively conducted.
Among them, Riemann-von Mangoldt's formula $N (T)=T/2 \pi \log T/2 \pi -T/2 \pi + O(\log T)$ is well known,
where $N(T)$ is the number of non-trivial zeros of $\zeta (s)$ with $0<\Im(s)<T$. 
This formula was generalized from zeros to $a$-points by Landau in \cite{BLL13}, 
where $a$-points of the function $f(s)$ are points defined by the solutions of $f (s)=a$. 
Note that $a$-points are zeros if $a=0$.

On the other hand, Bohr and Landau in \cite{BL14} showed that almost all non-trivial zeros of the Riemann zeta function are near the critical line.
Levinson in \cite{Lev75} extended this result to $a$-points, indicating that almost all $a$-points of the Riemann zeta function are near the critical line. 
Furthermore, some of these results have been generalized to derivatives of the Riemann zeta function (\cite{Ber70}, \cite{LM74}, \cite{Ono17}). 

In this paper, we prove some results on the $a$-points for symmetric sums with the Euler-Zagier multiple zeta functions (MZFs).  
The MZF is defined by 
\begin{align*}
 \zeta(s_{1},\dots,s_{r}):=\sum_{1\le n_{1}<\cdots<n_{r}}\frac{1}{n_{1}^{s_{1}}\cdots n_{r}^{s_{r}}},
\end{align*}
where $s_{j}\in\mathbb{C}\,(j=1,\ldots,r)$ are complex variables.
Matsumoto in \cite{Mat02} proved that the series is absolutely convergent
in the domain 
\[
 \{(s_{1},\ldots,s_{r})\in\mathbb{C}^{r}\;|\;\Re(s(l,r))>r-l+1\;(1\leq l\leq r)\},
\]
where $s(l,r):=s_{l}+\cdots+s_{r}$. 
Akiyama, Egami, and Tanigawa in \cite{AET01} and Zhao in \cite{Zha00} independently proved that $\zeta(s_{1},\dots,s_{r})$
is meromorphically continued to the whole space $\mathbb{C}^{r}$.
Kamano in \cite{Kam06} mentioned the trivial zeros of $\zeta(s,\dots,s)$. 
Non-trivial zeros of MZFs are numerically studied by Matsumoto and Sh\={o}ji in \cite{MS14,MS20}. 
Nakamura and Pa\'{n}kowski in \cite{NP16} estimated the number of $a$-points of $\zeta(s,\dots,s)$. 
Ikeda and Matsuoka in \cite{IM} studied zeros of $\zeta(s,\dots,s)$.

Throughout this paper, we write $s:=\sigma+it$ and $s_j:=\sigma_j+it_j$ for $j=1,\dots,r$.
Let $\mathfrak{S}_r$ be a symmetric group of degree $r$. 
We consider the function $\widetilde\zeta(s)$ defined by
\[
 \widetilde\zeta(s)
:=\sum_{\tau\in\mathfrak{S}_r} \zeta(a_{\tau(1)}s,\ldots,a_{\tau(r)}s)
\]
for $a_1,\ldots,a_r\in\mathbb{R}_{>0}$. 
Since $\widetilde\zeta(s)$ is a generalization of $\zeta (s,\dots,s)$ and is one of the simplest functions that can be created by multiplexing the Riemann zeta function, to study this function is a starting point for considering the $a$-point of MZFs.
In \cite{Hof92}, Hoffman showed 
\[
 \widetilde\zeta(s)
 =\sum_{l=1}^{r} 
  \sum_{P_1,\dots, P_l} 
  (-1)^{r-l} \prod_{j=1}^{l} (\#(P_j)-1)! \cdot \zeta\biggl( \sum_{j'\in P_j} a_{j'}s \biggr),  
\]
where $P_1,\dots, P_l$ runs the all partition of $\{1,\ldots,r \}$.
In the sequel, assume that $r\ge2$ is fixed, and $a_1,\dots,a_r$ satisfy $a_1\ge\cdots\ge a_r>0$.
\begin{thm} \label{main1}
For $a\in\mathbb{C}$, there exists a constant $C_1>0$ such that 
$\widetilde\zeta(s)$ has no $a$-point for $\sigma>C_1$. 
\end{thm}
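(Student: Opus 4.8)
The plan is to bypass Hoffman's signed expansion (whose low-order terms cancel heavily) and instead rewrite $\widetilde\zeta(s)$ as a Dirichlet series with \emph{non-negative} coefficients, for which a nonzero leading term is immediate. First I would observe that, for $\sigma$ large enough that every summand $\zeta(a_{\tau(1)}s,\dots,a_{\tau(r)}s)$ converges absolutely, interchanging the finite sum over $\tau\in\mathfrak{S}_r$ with the defining series gives
\[
\widetilde\zeta(s)=\sum_{\tau\in\mathfrak{S}_r}\ \sum_{1\le n_1<\cdots<n_r}\ \prod_{k=1}^r n_k^{-a_{\tau(k)}s}
=\sum_{\substack{m_1,\dots,m_r\ge1\\ m_i\ne m_j\ (i\ne j)}}\ \prod_{j=1}^r m_j^{-a_js}.
\]
Indeed, for a fixed increasing tuple $(n_1,\dots,n_r)$ the inner sum over $\tau$ runs through all assignments $m_j:=n_{\tau^{-1}(j)}$ of the distinct integers $n_1,\dots,n_r$ to the exponents $a_1,\dots,a_r$; letting $(n_k)$ also range over all increasing tuples then reproduces each tuple of pairwise distinct positive integers exactly once.

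Next I would group the right-hand side by \emph{conductor}: writing it as $\sum_{C}N(C)\,C^{-s}$ with $C=\prod_j m_j^{a_j}$ and $N(C)\in\mathbb{Z}_{\ge0}$ the number of distinct tuples having that value, the decisive feature is that every coefficient is a non-negative integer. To locate the smallest conductor, note that a tuple of distinct positive integers has sorted values $m_{(1)}<\cdots<m_{(r)}$ with $m_{(k)}\ge k$; hence by the rearrangement inequality (pairing the decreasing $a_j$ with the increasing $\log m_{(j)}$) together with $a_j>0$,
\[
\sum_{j=1}^r a_j\log m_j\ \ge\ \sum_{j=1}^r a_j\log m_{(j)}\ \ge\ \sum_{j=1}^r a_j\log j .
\]
Thus the minimal conductor is $C_0:=\prod_{j=1}^r j^{a_j}=2^{a_2}3^{a_3}\cdots r^{a_r}$, attained for example by $m_j=j$. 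Consequently $\kappa:=N(C_0)\ge1$ is a positive integer, and since $r\ge2$ and $a_2>0$ we have $C_0>1$.

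Finally I would extract the asymptotics. Fix $\sigma_0$ large enough that the identity above is valid and set $M:=\sum_C N(C)C^{-\sigma_0}=\widetilde\zeta(\sigma_0)<\infty$, and let $C_0'$ denote the next conductor after $C_0$. For $\sigma\ge\sigma_0$,
\[
\bigl|\widetilde\zeta(s)-\kappa\,C_0^{-s}\bigr|\le\sum_{C>C_0}N(C)\,C^{-\sigma}\le (C_0')^{-(\sigma-\sigma_0)}M,
\]
so that $\widetilde\zeta(s)=C_0^{-s}\bigl(\kappa+o(1)\bigr)$ as $\sigma\to\infty$, uniformly in $t$, because $C_0<C_0'$. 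Choosing $C_1$ so large that the error is below $\tfrac12 C_0^{-\sigma}$ for $\sigma>C_1$ yields $\tfrac12 C_0^{-\sigma}\le|\widetilde\zeta(s)|\le(\kappa+\tfrac12)C_0^{-\sigma}$ there. The lower bound shows $\widetilde\zeta(s)\ne0$, which settles the case $a=0$, while the upper bound gives $\widetilde\zeta(s)\to0$ as $C_0>1$, so that $|\widetilde\zeta(s)|<|a|$ for $\sigma$ large, settling $a\ne0$; enlarging $C_1$ to cover both cases completes the argument. The one genuinely delicate point is guaranteeing that the leading coefficient does not vanish, and this is precisely why I pass to the distinct-tuple series with its all-positive coefficients rather than differentiate Hoffman's alternating-sign formula, in which the analogous low-order coefficients cancel (for $r\ge2$ even the constant term is $0$).
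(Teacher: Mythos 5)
Your argument is correct, and it arrives at exactly the asymptotic the paper isolates as its Lemma \ref{22}, namely $\widetilde\zeta(s)=BM^{s}(1+O(c^{\sigma}))$ with $M=1/(1^{a_1}\cdots r^{a_r})=C_0^{-1}$ and $B$ equal to your $\kappa$ (by the equality case of the rearrangement inequality, the permutations $\tau$ with $(a_{\tau(1)},\dots,a_{\tau(r)})=(a_1,\dots,a_r)$ correspond bijectively to the arrangements of $\{1,\dots,r\}$ attaining the minimal conductor). The route is genuinely different, though. The paper proves an inductive tail estimate (Lemma \ref{21}) for each individual $\zeta(a_{\tau(1)}s,\dots,a_{\tau(r)}s)$, peeling off the term $(m_1,\dots,m_r)=(1,\dots,r)$ and bounding the remainder by comparison with an integral, and then sums over $\tau\in\mathfrak{S}_r$, noting that each non-dominant permutation contributes a factor whose base $1^{a_1}\cdots r^{a_r}/(1^{a_{\tau(1)}}\cdots r^{a_{\tau(r)}})$ lies strictly in $(0,1)$. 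You instead symmetrize first, converting $\widetilde\zeta(s)$ into a single series over tuples of pairwise distinct positive integers with non-negative integer coefficients, and locate the minimal conductor by rearrangement; for this theorem that is arguably cleaner, since positivity of the leading coefficient is immediate and no induction is needed, whereas the paper's sharper Lemma \ref{21} carries explicit second-order information. The one step you should make explicit is the existence of a next conductor $C_0'>C_0$: since $\prod_j m_j^{a_j}\le X$ forces $m_j\le X^{1/a_j}$ for every $j$, only finitely many tuples have conductor below any given bound, so the conductor set is discrete and your geometric-decay estimate for the tail is justified. (Your closing remark about Hoffman's formula is moot; the paper does not use that formula for this theorem either.)
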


Let $A:=a_1+\cdots+a_r$.
We define 
\begin{align*}
C(\epsilon)
&:=\bigcup_{n=1}^{\infty} \left\{
s\in\mathbb{C} \mid \left|s+\frac{ 2n }{ A } \right|\le\epsilon 
\right\}, \\
 D(C_2,p)
 &=\left\{
   s\in\mathbb{C} \mid \sigma \le \min\{ C_2,-pt^2 \}
  \right\}.
\end{align*}
\begin{thm} \label{main2}
 For $a\in\mathbb{C}$, $p>0$, and small $\epsilon>0$, there exists a constant $C_2<0$ such that 
 $\widetilde\zeta(s)$ has no $a$-point in $D(C_2,p) \setminus C(\epsilon)$. 
 In addition, for each disk there exists exactly one $a$-point.
\end{thm}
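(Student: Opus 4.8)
The plan is to exploit Hoffman's formula, which writes $\widetilde\zeta(s)$ as a signed sum over set partitions of products of single Riemann zeta values, and to isolate the one term that dominates as $\sigma\to-\infty$. For a partition $\{P_1,\dots,P_l\}$ of $\{1,\dots,r\}$ the corresponding factor is $\prod_{j=1}^{l}\zeta(b_j s)$ with $b_j=\sum_{j'\in P_j}a_{j'}$ and $b_1+\cdots+b_l=A$. The coarsest partition $l=1$ contributes the single term $(-1)^{r-1}(r-1)!\,\zeta(As)$, and I expect this to be leading. To verify this I would apply the functional equation $\zeta(w)=2^w\pi^{w-1}\sin(\pi w/2)\Gamma(1-w)\zeta(1-w)$ to each $\zeta(b_j s)$: since $\Re(b_j s)=b_j\sigma\to-\infty$ one has $\zeta(1-b_j s)=1+o(1)$, and Stirling applied to $\Gamma(1-b_j s)$ yields the super-exponential size $|\zeta(b_j s)|\asymp(b_j|\sigma|/2\pi e)^{b_j|\sigma|}$ up to lower-order factors. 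Taking logarithms, a partition term has magnitude governed by $|\sigma|\bigl(A\log(|\sigma|/2\pi e)+\sum_j b_j\log b_j\bigr)$, in which only $\sum_j b_j\log b_j$ depends on the partition.

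Since $x\log x$ is superadditive on $\mathbb{R}_{>0}$, one has $\sum_j b_j\log b_j\le A\log A$ with equality exactly for the one-block partition; hence every proper refinement is smaller than the $l=1$ term by a factor $\exp(-c_P|\sigma|)$ for some $c_P>0$ depending only on the partition and the $a_j$. The purpose of the region $D(C_2,p)$ is to control the imaginary part: the constraint $\sigma\le-pt^2$ forces $|t|\le\sqrt{|\sigma|/p}$, so the $t$-dependent contributions of the factors $\sin(\pi b_j s/2)$ and $\Gamma(1-b_j s)$ are at most $\exp(O(\sqrt{|\sigma|}))$, sub-exponential and therefore absorbed by the exponential separation $\exp(-c_P|\sigma|)$. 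Consequently, uniformly on $D(C_2,p)$ with $C_2$ sufficiently negative, $\widetilde\zeta(s)=(-1)^{r-1}(r-1)!\,\zeta(As)\bigl(1+o(1)\bigr)$.

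From here the two assertions follow. For the $a$-point free region, observe that the trivial zeros of $\zeta(As)$ inside $D(C_2,p)$ lie precisely at $s=-2n/A$ on the real axis (nontrivial zeros would need $0<A\sigma<1$, impossible for $\sigma\le C_2<0$), so deleting $C(\epsilon)$ removes $\epsilon$-neighbourhoods of all of them. Off these disks $|\zeta(As)|$ is bounded below by a fixed multiple of $\epsilon$ times a quantity tending to infinity as $\sigma\to-\infty$; thus $|\widetilde\zeta(s)|$ exceeds $|a|$, and $\widetilde\zeta(s)=a$ has no solution there. For the count inside a single disk about $s_0=-2n/A$, I would apply Rouch\'e's theorem on the circle $|s-s_0|=\epsilon$, comparing $f(s)=\widetilde\zeta(s)-a$ with $g(s)=(-1)^{r-1}(r-1)!\,\zeta(As)$. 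On that circle $\zeta(As)$ sits near a simple zero, so $|g(s)|\asymp(r-1)!\,|A\zeta'(-2n)|\,\epsilon$, while $f-g$ consists of the subleading partition terms together with $-a$; both the subleading terms (smaller by $\exp(-c_P|\sigma_0|)$) and $|a|$ are negligible against $|g|$ once $n$ is large, i.e.\ once $C_2$ is sufficiently negative. Hence $f$ and $g$ have equally many zeros in the disk, namely one, giving exactly one $a$-point per disk.

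The main obstacle is the growth analysis: establishing $|\zeta(b_j s)|\asymp(b_j|\sigma|/2\pi e)^{b_j|\sigma|}$ uniformly throughout the parabolic region, with explicit and uniform control of the $t$-dependent errors from $\sin(\pi b_j s/2)$ (whose $\sinh$ part can grow like $\exp(\sqrt{|\sigma|})$) and from $\Gamma(1-b_j s)$ via Stirling, so that the exponential separation between the one-block term and every refinement genuinely survives. The second delicate point is the Rouch\'e comparison on the small circles, where the leading term is itself of size $\epsilon$: one must check that this fixed radius still dominates both $a$ and the subleading terms, which is exactly what forces $C_2$ (equivalently, the admissible range of $n$) to depend on $a$, $p$, and $\epsilon$.
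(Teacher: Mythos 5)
Your proposal is correct and follows essentially the same route as the paper: isolate the one-block term $(-1)^{r-1}(r-1)!\,\zeta(As)$ of Hoffman's formula, show via the functional equation and Stirling that it dominates every refined partition term exponentially in $|\sigma|$ (the parabolic constraint $\sigma\le -pt^2$ serving exactly to keep the $t$-dependent factors sub-exponential), then get the $a$-point free region from a lower bound on $|\zeta(As)|$ off the disks of $C(\epsilon)$ and apply Rouch\'e on each disk around $s=-2n/A$. The only organizational difference is that the paper proves the domination by merging two factors at a time (its Lemma 3.2, a bound for $\zeta(s_1+s_2)/(\zeta(s_1)\zeta(s_2))$ applied repeatedly), whereas you compare each partition product to $\zeta(As)$ in one step via superadditivity of $x\log x$; both rest on the same estimates.
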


\begin{thm} \label{main2+}
 For $a\in\mathbb{C}$ and $y_1>y_2>0$, there exists a constant $C_3>0$ such that 
 $\widetilde\zeta(s)$ has no $a$-point in 
 $\{ s\in\mathbb{C} \mid -y_1\le \sigma \le -y_2, |t|>C_3 \}$.
\end{thm}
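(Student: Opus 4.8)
The plan is to prove Theorem~\ref{main2+}, which claims an $a$-point free region in a vertical strip $-y_1 \le \sigma \le -y_2$ (with $y_1 > y_2 > 0$) once $|t|$ is large enough. Let me think about what the function looks like there.

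We have Hoffman's formula expressing $\widetilde\zeta(s)$ as a sum over partitions of terms $\zeta(\sum_{j' \in P_j} a_{j'} s)$. In the region $-y_1 \le \sigma \le -y_2$ with $|t|$ large, each argument $\sum_{j' \in P_j} a_{j'} s = (\sum_{j' \in P_j} a_{j'})(\sigma + it)$ has real part bounded in a fixed interval while its imaginary part grows like $|t|$. So I'm evaluating products of Riemann zeta values to the left of the critical strip, where I must use the functional equation.

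Let me recall the functional equation. For $\zeta(w)$ with $w$ in a fixed-width vertical strip and $\Im(w) \to \infty$, the functional equation gives $\zeta(w) = \chi(w)\zeta(1-w)$ where $\chi(w) = 2^w \pi^{w-1}\sin(\pi w/2)\Gamma(1-w)$. Since $1-w$ has real part bounded and large positive, $\zeta(1-w) \to 1$. The factor $\chi(w)$ is the one carrying the growth: by Stirling, $|\chi(w)|$ grows like a power of $|\Im(w)|$. The key point is that $\log|\chi(w)|$ has a definite sign and magnitude depending on $\Re(w)$.

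So the strategy is the following. Order the partitions/terms by the size of $b_j := \sum_{j' \in P_j} a_{j'}$, equivalently by the coefficient on $s$. In the strip, the term $\zeta(b s)$ with argument $bs$ of real part $b\sigma \in [-b y_1, -b y_2]$ (all negative) will, via the functional equation, have magnitude of order $|t|^{(1/2 - b\sigma)}$ times the near-$1$ factor $\zeta(1-bs)$, so roughly $|t|^{1/2 + b y_2}$ up to $|t|^{1/2 + b y_1}$. Hence among all the terms in Hoffman's expansion, the single dominant one is the term with the \emph{largest} total coefficient, namely $b = A = a_1 + \dots + a_r$ coming from the single-block partition $P_1 = \{1,\dots,r\}$, which contributes $(-1)^{r-1}(r-1)!\,\zeta(As)$. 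First I would show this term strictly dominates all others in absolute value for $|t|$ large: the ratio of any subdominant term to $\zeta(As)$ is $O(|t|^{-c})$ for some $c>0$ determined by the gap $A y_2 - (\text{next block sum})y_1$, provided $y_1, y_2$ are close enough --- and here the hypothesis $y_1 > y_2 > 0$ with the strip being \emph{thin enough} is what must be exploited. \textbf{This domination is the main obstacle}: the exponents depend on both $y_1$ and $y_2$, and a naive bound need not separate $\zeta(As)$ from terms like $\zeta((A-a_r)s)\zeta(a_r s)$; one must check the worst case $\sigma$ for each term and confirm a uniform gap survives.

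Once domination is established, I would write $\widetilde\zeta(s) = (-1)^{r-1}(r-1)!\,\zeta(As)\bigl(1 + g(s)\bigr)$ with $g(s) = O(|t|^{-c}) \to 0$ uniformly in the strip as $|t| \to \infty$. Then $|\widetilde\zeta(s)|$ is bounded below by $\tfrac12 (r-1)!\,|\zeta(As)|$ for $|t|$ large, and since $|\zeta(As)| \gg |t|^{1/2 + A y_2} \to \infty$, we get $|\widetilde\zeta(s)| \to \infty$ uniformly in the strip. In particular $\widetilde\zeta(s) \ne a$ for every fixed $a \in \mathbb{C}$ once $|t| > C_3$, which is exactly the claim. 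The lower bound $|\zeta(As)| \gg |t|^{1/2 + A y_2}$ follows from Stirling applied to $\chi(As)$ together with the bound $\zeta(1-As) = 1 + O(2^{-A y_2})$ (bounded away from $0$), using that $\Re(1-As) = 1 + A|\sigma| \ge 1 + A y_2 > 1$ stays in the region of absolute convergence. The one subtlety to record carefully is uniformity of all estimates across the compact $\sigma$-interval $[-y_1,-y_2]$, which is routine since Stirling's asymptotics for $\chi$ are uniform on vertical strips of bounded width.
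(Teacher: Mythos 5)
Your overall strategy---use Hoffman's partition formula and the functional equation to show $|\widetilde\zeta(s)|\to\infty$ uniformly in the strip, so that it eventually exceeds $|a|$---is the same as the paper's. But the step you yourself flag as ``the main obstacle'' fails, and it fails in the opposite direction from what you claim. A partition $P_1,\dots,P_l$ contributes a \emph{product} $\prod_{k=1}^{l}\zeta(b_k s)$ with $b_k=\sum_{j'\in P_k}a_{j'}$, and by the functional equation and Stirling each factor has modulus $\asymp |t|^{1/2-b_k\sigma}$, so the whole term has modulus $\asymp |t|^{l/2-A\sigma}$: the exponents $-b_k\sigma$ always sum to $-A\sigma$ regardless of the partition, while each factor donates an extra $1/2$. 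Hence a term grows \emph{with the number of blocks}. The all-singletons term $\prod_{j=1}^{r}\zeta(a_j s)$, of size $|t|^{r/2-A\sigma}$, is the dominant one, and the single-block term $(-1)^{r-1}(r-1)!\,\zeta(As)$, of size $|t|^{1/2-A\sigma}$, is in fact the \emph{smallest}. Your own test case confirms this: $|\zeta((A-a_r)s)\zeta(a_r s)|\asymp |t|^{1-A\sigma}$ beats $|\zeta(As)|\asymp|t|^{1/2-A\sigma}$ by a factor $|t|^{1/2}$, and no thinness assumption on the interval $[-y_1,-y_2]$ can repair this; the asymptotic $\widetilde\zeta(s)=(-1)^{r-1}(r-1)!\,\zeta(As)(1+o(1))$ you propose is false.

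The paper's Lemma \ref{4443} is exactly the corrected version of your domination step: $\widetilde\zeta(s)=\prod_{j=1}^{r}\zeta(a_j s)\,(1+O(t^{-1/2}))$ in the strip, proved by iterating the estimate $\zeta(b_1s)\zeta(b_2s)/\zeta((b_1+b_2)s)\gg t^{1/2}$. With that replacement the rest of your argument goes through essentially verbatim and yields the stronger lower bound $|\widetilde\zeta(s)|\gg t^{r/2-A\sigma}\ge t^{r/2+Ay_2}\to\infty$, which is precisely how the paper concludes; no restriction on the width of the strip is needed.
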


Let $\rho_{a}:=\beta_{a}+i\gamma_{a}$ be an $a$-point of $\widetilde\zeta(s)$. 
For $a\in\mathbb{C}$, let $N_{y}(a;T)$ count the number of $a$-points $\rho_{a}$ with multiplicity of $\widetilde{\zeta}(s)$ with $-y<\beta_a$ and $0<\gamma_a<T$. 
\begin{thm} \label{main3}
 For $a\in\mathbb{C}$ and $y>0$, we have
 \begin{align*}
  &N_{y}(a;T) \\
  &=
  \begin{cases}
   \displaystyle
   \frac{T}{2\pi} \sum_{ j=1 }^r a_j \log \frac{a_jT}{2\pi}-\frac{AT}{2\pi} 
   -\frac{ T }{ 2\pi }\log(1^{a_1}\cdots r^{a_r})+O(\log T) \quad(a=0), \\
   \displaystyle
   \frac{T}{2\pi} \sum_{ j=1 }^r a_j \log\frac{a_jT}{2\pi}-\frac{AT}{2\pi} 
   +O(\log T)  \qquad\qquad\qquad\qquad\quad\,\,\,\, (a\ne0).
  \end{cases}
 \end{align*}
\end{thm}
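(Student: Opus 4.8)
The plan is to evaluate $N_y(a;T)$ by the argument principle and to extract the main term from the functional equation of $\zeta$, using the structure of $\widetilde\zeta(s)$ to reduce it to its dominant factor. Fix $b:=C_1+1$ with $C_1$ as in Theorem~\ref{main1}, so that $\widetilde\zeta(s)\ne a$ for $\sigma\ge b$; choose $y_2<y<y_1$ and invoke Theorem~\ref{main2+}, so that for $|t|>C_3$ no $a$-point lies on the line $\sigma=-y$. After a harmless perturbation of $y$ and of $T$ (by $O(1)$) to keep the contour off the $a$-points, apply the argument principle to the positively oriented rectangle $\mathcal R$ with vertices $-y,\,b,\,b+iT,\,-y+iT$:
\[
 N_y(a;T)=\frac{1}{2\pi}\,\Delta_{\mathcal R}\arg\bigl(\widetilde\zeta(s)-a\bigr).
\]
I then split $\Delta_{\mathcal R}\arg$ into the contributions of the four edges and estimate each; the finitely many disk $a$-points of Theorem~\ref{main2} with $\beta_a>-y$ cluster near the real axis and are absorbed into the $O(1)$ from the bottom edge.

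For \emph{the right edge} ($\sigma=b$), expanding the defining symmetric sum over $\mathfrak{S}_r$ and collecting terms gives the Dirichlet series
\[
 \widetilde\zeta(s)=\sum_{\substack{m_1,\dots,m_r\ge1\\ \text{pairwise distinct}}}\frac{1}{m_1^{a_1 s}\cdots m_r^{a_r s}},
\]
whose leading term as $\sigma\to+\infty$ is $(1^{a_1}\cdots r^{a_r})^{-s}$, since $a_1\ge\cdots\ge a_r$ forces the minimizing choice $m_j=j$ by the rearrangement inequality. Hence, for $a=0$, $\arg\widetilde\zeta(s)$ changes by $-T\log(1^{a_1}\cdots r^{a_r})+O(1)$ up this edge, contributing $-\tfrac{T}{2\pi}\log(1^{a_1}\cdots r^{a_r})$; for $a\ne0$, $\widetilde\zeta(s)-a\to-a\ne0$ and the contribution is only $O(1)$. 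This single edge accounts for the entire discrepancy between the two cases in the statement.

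\emph{The left edge} ($\sigma=-y$) produces the main term. There each factor has $\Re(a_js)=-a_jy<0$, so the functional equation $\zeta(a_js)=\chi(a_js)\zeta(1-a_js)$ applies with $\Re(1-a_js)=1+a_jy>1$, giving $|\zeta(a_js)|\asymp|\chi(a_js)|\asymp(a_jt/2\pi)^{1/2+a_jy}$. Comparing powers of $t$, the term with the most zeta factors, namely $\prod_{j=1}^r\zeta(a_js)$ (coefficient $+1$), dominates all coarser-partition terms on this line, so that $\widetilde\zeta(s)=\prod_{j=1}^r\zeta(a_js)\bigl(1+O(t^{-1/2})\bigr)$ for large $t$. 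Since $\Re(1-a_js)>1$, the factors $\zeta(1-a_js)$ stay bounded away from $0$ and contribute only $O(1)$, while Stirling applied to $\prod_j\chi(a_js)$ (traversed downward) produces
\[
 \frac{T}{2\pi}\sum_{j=1}^r a_j\log\frac{a_jT}{2\pi}-\frac{AT}{2\pi}+O(1),
\]
which is just $\sum_{j=1}^r N(a_jT)$ for the classical zero-counting function $N$. This is the asserted main term, identical in both cases.

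Finally, \emph{the horizontal edges and the main obstacle}. The bottom edge lies on $t=0$, where $\widetilde\zeta$ is real, so its contribution is $O(1)$. The decisive difficulty is the top edge $t=T$: bounding $\arg(\widetilde\zeta(\sigma+iT)-a)$ for $-y\le\sigma\le b$ by $O(\log T)$ is the analogue of the estimate $S(T)=O(\log T)$ in the Riemann--von Mangoldt formula, and it is complicated here by the presence of $r$ distinct ``critical lines'' $\sigma=1/(2a_j)$, on which the individual factors $\zeta(a_js)$ may be small, so that the domination of $\widetilde\zeta$ by $\prod_j\zeta(a_js)$ must be re-established away from the regime of the left edge. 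I expect to control this by bounding the number of sign changes of $\Re(\widetilde\zeta-a)$ on the segment via Jensen's inequality (Backlund's method), together with polynomial growth bounds for $\widetilde\zeta$ and a suitable choice of $T$ avoiding small values; carrying this out uniformly across all $r$ critical lines is the step I anticipate to be the hardest.
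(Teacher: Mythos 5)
Your proposal follows essentially the same route as the paper: the argument principle on a rectangle, with the right edge producing the $-\frac{T}{2\pi}\log(1^{a_1}\cdots r^{a_r})$ discrepancy from the leading Dirichlet term $B(1^{a_1}\cdots r^{a_r})^{-s}$, the left edge producing the main term via the functional equation after establishing $\widetilde\zeta(s)=\prod_{j}\zeta(a_js)(1+O(t^{-1/2}))$ in the strip, and the top edge bounded by $O(\log T)$ via Backlund's method. (The paper normalizes $G(s)=\widetilde\zeta(s)/(BM^s)$ so that the right edge contributes $O(1)$ and the $\log M$ term reappears on the left edge; this is equivalent to your direct computation on the right edge.) Two caveats. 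First, your bottom edge cannot lie on $t=0$: the functions $\zeta\bigl(\sum_{j'\in P_j}a_{j'}s\bigr)$ in Hoffman's decomposition have poles at real points such as $s=1/a_j$, so the contour must start at some $t_0>0$, with the finitely many $a$-points below $t_0$ absorbed into $O(1)$. Second, the top-edge step you leave as an anticipated difficulty is in fact the method the paper uses, and it is easier than you fear: Backlund's argument applied to $X(z)=\tfrac12\bigl(G(z+iT)+G(z-iT)\bigr)$ needs only a polynomial bound $\widetilde\zeta(\sigma+iT)\ll T^{L}$ throughout the strip (from the convexity bounds for $\mu(\sigma)$) and a lower bound for $|G|$ at the single point $x+iT$ far to the right (from the Dirichlet-series asymptotics); no domination of $\widetilde\zeta$ by $\prod_j\zeta(a_js)$ near the lines $\sigma=1/(2a_j)$, and no special choice of $T$, is required.
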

\begin{thm} \label{main4}
 Let $y>0$. 
 For large $T$, we have 
 \begin{align*}
  2\pi \sum_{\substack{ -y < \beta_a \\ 0<\gamma_a <T }} 
  \left( \beta_a -\frac{ 1 }{ 2 } \right)
  &=\frac{ r-A }{ 2 }\, T\log T+O(T), \\
  2\pi \sum_{\substack{ -y < \beta_a \\ 0<\gamma_a <T }} 
  \left( \beta_a -\frac{ r }{ 2A } \right)
  &=O(T). 
 \end{align*}
\end{thm}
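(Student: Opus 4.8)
The plan is to apply Littlewood's lemma to the function $\widetilde\zeta(s)-a$ on the rectangle with vertices $-y$, $\sigma_2$, $\sigma_2+iT$, $-y+iT$, where $\sigma_2>C_1$ is fixed so that, by Theorem \ref{main1}, $\widetilde\zeta(s)-a$ has no $a$-point (hence no zero) with $\sigma\ge\sigma_2$. Choosing $y$ generically and $T$ away from the ordinates of $a$-points, Littlewood's lemma expresses
\[
 2\pi\sum_{\substack{-y<\beta_a\\0<\gamma_a<T}}(\beta_a+y)
\]
as a combination of four boundary integrals: the two vertical logarithmic integrals $\int_0^T\log|\widetilde\zeta(-y+it)-a|\,dt$ and $\int_0^T\log|\widetilde\zeta(\sigma_2+it)-a|\,dt$, and the two horizontal $\arg$-integrals over $[-y,\sigma_2]$ at heights $0$ and $T$. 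Since $\sigma_2>C_1$ removes the constraint $\beta_a<\sigma_2$, this sum ranges over exactly the $a$-points counted by $N_y(a;T)$.

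First I would dispose of the three ``easy'' integrals. On the line $\sigma=\sigma_2$ the function $\widetilde\zeta(\sigma_2+it)-a$ is bounded and, by Theorem \ref{main1}, bounded away from $0$, so its logarithmic integral is $O(T)$. The horizontal integral at height $0$ is a fixed constant, hence $O(1)$, and the one at height $T$ is $O(\log T)$ by the same argument-principle estimate used in the proof of the Riemann--von Mangoldt formula (Theorem \ref{main3}); together these contribute only $O(T)$.

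The heart of the matter is the left vertical integral. Using Hoffman's formula I would write $\widetilde\zeta(s)$ as a finite combination of products $\prod_j\zeta(b_js)$ indexed by set partitions, where each $b_j>0$ and $\sum_j b_j=A$. For $\sigma=-y<0$ the functional equation $\zeta(w)=\chi(w)\zeta(1-w)$ with $|\chi(u+iv)|\asymp(|v|/2\pi)^{1/2-u}$ shows that the product attached to a partition into $l$ blocks has modulus of order $(|t|/2\pi)^{l/2+Ay}$, since for $y>0$ each factor $\zeta(1-b_js)$ lies in $\Re(\cdot)>1$ and is thus bounded above and below. The exponent $l/2+Ay$ is maximised uniquely by the partition into singletons, so $\widetilde\zeta(-y+it)=\bigl(\prod_{j=1}^r\zeta(a_js)\bigr)\bigl(1+O(|t|^{-1/2})\bigr)$; as the modulus tends to infinity, subtracting $a$ is negligible, and $\log|\widetilde\zeta(-y+it)-a|=(r/2+Ay)\log(|t|/2\pi)+O(1)$. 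Integrating over $0<t<T$ gives
\[
 \int_0^T\log|\widetilde\zeta(-y+it)-a|\,dt=\Bigl(\tfrac r2+Ay\Bigr)T\log T+O(T),
\]
whence $2\pi\sum(\beta_a+y)=(r/2+Ay)\,T\log T+O(T)$, the sum being over the $a$-points above.

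Finally I would convert to the two stated sums. Writing $\beta_a-c=(\beta_a+y)-(y+c)$, summing, and substituting the leading term $N_y(a;T)=\frac{A}{2\pi}T\log T+O(T)$ from Theorem \ref{main3} (which is the same for $a=0$ and $a\ne0$), the choices $c=1/2$ and $c=r/(2A)$ yield
\[
 2\pi\sum\bigl(\beta_a-\tfrac12\bigr)=\Bigl(\tfrac r2+Ay-\bigl(y+\tfrac12\bigr)A\Bigr)T\log T+O(T)=\frac{r-A}{2}\,T\log T+O(T)
\]
and
\[
 2\pi\sum\bigl(\beta_a-\tfrac{r}{2A}\bigr)=\Bigl(\tfrac r2+Ay-\bigl(y+\tfrac{r}{2A}\bigr)A\Bigr)T\log T+O(T)=O(T),
\]
as required; the dependence on the auxiliary parameter $y$ cancels, as it must. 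The main obstacle is the left-line analysis: isolating the dominant product $\prod_j\zeta(a_js)$ uniformly in $t$ and extracting the precise coefficient $r/2+Ay$ from the functional equation, together with making the $O(\log T)$ bound for the top $\arg$-integral rigorous.
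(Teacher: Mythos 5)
Your proposal is correct and follows essentially the same route as the paper: Littlewood's lemma on the rectangle, the observation (the paper's Lemma \ref{4443}, proved via the functional equation and Stirling) that the singleton-partition product $\prod_{j}\zeta(a_js)$ dominates on $\sigma=-y$ so that the left vertical integral is $(r/2+Ay)T\log T+O(T)$, and the conversion via $2\pi N_y(a;T)=AT\log T+O(T)$ from Theorem \ref{main3}. The only cosmetic difference is that the paper works with the normalized function $G(s)$ so the right-hand vertical integral is $O(1)$ rather than your $O(T)$, which is immaterial at the stated level of precision.
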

\begin{thm} \label{main5}
 Let $y_3=1/(2a_r)$. 
 We have 
 \[
  2\pi \sum_{\substack{ y_3+\delta < \beta_a \\ 0<\gamma_a <T }} 1
  =O\left( \frac{ T\log\log T }{ \delta }\right) 
 \]
 uniformly for $\delta>0$. 
 Especially, we have 
 \[
  2\pi \sum_{\substack{ y_3+\frac{ (\log\log T)^2 }{ \log T } < \beta_a \\ 0<\gamma_a <T }} 1
  =O\left( \frac{ T\log T }{ \log\log T }\right). 
 \]
\end{thm}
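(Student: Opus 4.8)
The plan is to bound the counting function by reducing it, through Littlewood's lemma, to a mean value of $\log|\widetilde\zeta(s)-a|$ on the single vertical line $\sigma=y_3$, and then to estimate that mean value via Hoffman's formula together with the unconditional control of $\log|\zeta|$ on the critical line. This reuses the Littlewood setup from the proof of Theorem \ref{main4}, specialized to the extreme line $\sigma=y_3=1/(2a_r)$.

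First I would set $n(\sigma,T):=\#\{a\text{-points }\rho_a:\ \sigma<\beta_a,\ 0<\gamma_a<T\}$, which is non-increasing in $\sigma$ and vanishes for $\sigma>C_1$ by Theorem \ref{main1}. Since each $a$-point with $y_3<\beta_a$ contributes $\beta_a-y_3$ to $\int_{y_3}^{C_1}n(\sigma,T)\,d\sigma$, monotonicity of $n$ gives
\[
\delta\, n(y_3+\delta,T)\le\int_{y_3}^{y_3+\delta}n(\sigma,T)\,d\sigma\le\sum_{\substack{y_3<\beta_a\\ 0<\gamma_a<T}}(\beta_a-y_3)
\]
uniformly in $\delta>0$. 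It therefore suffices to prove $\sum_{y_3<\beta_a,\,0<\gamma_a<T}(\beta_a-y_3)=O(T\log\log T)$; the first displayed bound follows on dividing by $\delta$, and the second on taking $\delta=(\log\log T)^2/\log T$, since then $T\log\log T/\delta=T\log T/\log\log T$.

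Next I would apply Littlewood's lemma to $\widetilde\zeta(s)-a$ on the rectangle $[y_3,C_1]\times[0,T]$, which by Theorem \ref{main1} contains every $a$-point with $y_3<\beta_a$, $0<\gamma_a<T$. This yields
\[
2\pi\sum_{\substack{y_3<\beta_a\\0<\gamma_a<T}}(\beta_a-y_3)=\int_0^T\log|\widetilde\zeta(y_3+it)-a|\,dt-\int_0^T\log|\widetilde\zeta(C_1+it)-a|\,dt+O(\log T),
\]
where the $O(\log T)$ collects the variation of $\arg(\widetilde\zeta-a)$ along the two horizontal sides, bounded in the usual way through Jensen's formula and the polynomial growth of $\widetilde\zeta$ in $t$. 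On $\sigma=C_1$ the quantity $\widetilde\zeta(C_1+it)-a$ is bounded away from $0$ (for $a=0$ its leading Dirichlet term $2^{-a_rs}$ has constant modulus $2^{-a_rC_1}$), so the second integral is $O(T)$. Everything thus reduces to an upper bound for $\int_0^T\log|\widetilde\zeta(y_3+it)-a|\,dt$. Here I would use $\log|\cdot|\le\log^+|\cdot|$ together with Hoffman's formula to bound $\log^+|\widetilde\zeta(y_3+it)-a|$ by $O(1)+\sum_{B}\log^+|\zeta(\sigma_B(y_3+it))|$, the sum running over all blocks $B$ of all partitions, with $\sigma_B:=\sum_{j\in B}a_j\ge a_r$; this uses $\log^+(xy)\le\log^+x+\log^+y$ and $\log^+(\sum_i x_i)\le O(1)+\sum_i\log^+x_i$. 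On $\sigma=y_3$ one has $\Re(\sigma_B(y_3+it))=\sigma_B/(2a_r)\ge1/2$, with equality precisely for singleton blocks $\{j\}$ having $a_j=a_r$. For such a critical block, a change of variable and Cauchy--Schwarz give $\int_0^T\log^+|\zeta(\tfrac12+ia_rt)|\,dt\le\int_0^T\bigl|\log|\zeta(\tfrac12+ia_rt)|\bigr|\,dt\ll T(\log\log T)^{1/2}$ by Selberg's estimate $\int_0^T(\log|\zeta(\tfrac12+it)|)^2\,dt\ll T\log\log T$; for every other block $\Re>1/2$ is fixed, where $\log|\zeta|$ has bounded mean square and the integral is $O(T)$. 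Summing over the finitely many blocks gives $\int_0^T\log|\widetilde\zeta(y_3+it)-a|\,dt=O(T\log\log T)$, completing the reduction.

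I expect the main obstacle to be exactly this last unconditional mean-value estimate on the extreme line $\sigma=y_3$. The point is that the critical-line factor(s) $\zeta(a_rs)$ must be passed through without invoking high power moments of $|\zeta|$, which are unavailable unconditionally when several of the $a_j$ equal $a_r$; the $\log^+$/Selberg route circumvents this and is precisely what produces the $\log\log T$. The remaining ingredients—the $O(\log T)$ control of the horizontal argument integrals and the lower bound for $\widetilde\zeta-a$ on $\sigma=C_1$—are routine and should follow from the growth and value information already established for Theorems \ref{main1} and \ref{main4}.
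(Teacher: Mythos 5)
Your argument is correct and shares the paper's overall skeleton---Littlewood's lemma applied to $\widetilde\zeta(s)-a$ on a rectangle with left edge $\sigma=y_3$, followed by the trivial division by $\delta$ and the choice $\delta=(\log\log T)^2/\log T$---but you handle the decisive step, the upper bound $\int\log|\widetilde\zeta(y_3+it)-a|\,dt\ll T\log\log T$, by a genuinely different method. You distribute $\log^+$ over Hoffman's partition formula and then, for the blocks landing exactly on the critical line, invoke Selberg's unconditional moment estimate $\int_0^T(\log|\zeta(\tfrac12+it)|)^2\,dt\ll T\log\log T$ together with Cauchy--Schwarz, getting $T(\log\log T)^{1/2}$ per critical block and $O(T)$ for the rest. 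The paper never touches moments of $\log|\zeta|$: it raises each partition term to a small power $u<2/r$ so that H\"older spreads the exponent as $ul<2$ over the $l$ zeta factors, applies Jensen's inequality to pull the logarithm outside the $t$-integral, and extracts $\log\log T$ from $\log\bigl(\tfrac1T\cdot T\log T\bigr)$, using only the classical bound $\int_{t_0}^T|\zeta(\sigma+it)|^{\alpha}\,dt\ll T\log T$ for $\sigma\ge1/2$, $\alpha\le2$. So the paper's route is more elementary and self-contained, while yours imports a deeper theorem but in exchange gives the slightly stronger bound $O(T(\log\log T)^{1/2})$ for the boundary integral, hence $O(T(\log\log T)^{1/2}/\delta)$ in the theorem. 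Two small repairs: the rectangle must start at some $t_0>0$ rather than $t=0$, since the factors $\zeta(\sigma_B s)$ give $\widetilde\zeta$ real poles in $(y_3,1/a_r]$ and Littlewood's lemma needs analyticity; and a block with $\sigma_B=2a_r$ sits at abscissa exactly $1$, where ``bounded mean square of $\log|\zeta|$'' should be replaced by, e.g., $\log^+|\zeta(1+it)|\ll\log\log t$ or the same Jensen-plus-second-moment device, either of which still yields an admissible $O(T\log\log T)$.
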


\begin{rem}
We do not know that $y_3$ is the best possible.
\end{rem}

Matsumoto and Tsumura in \cite{MT15} gave the mean value theorems for the double zeta function $\zeta(s_1,s_2)$ with respect to $s_2$, and they mentioned that the region $\{(s_1,s_2)\ |\ \sigma_1+\sigma_2=3/2\}$ might be the double analogue of the critical line of the Riemann zeta function. Ikeda, Matsuoka, and Nagata in \cite{IMN} also calculated the mean value $\int|\zeta(\sigma_1+it,\sigma_2+it)|^2dt$, and in this case, they conjectured that the boundary of the region $\{(s_1,s_2)\ |\ \sigma_1+\sigma_2>1,\ \sigma_2>1/2\}$ is  the double analogue of the critical line. These analogues comes from the the mean values of the double zeta function. For the $a$-points of the Riemann zeta function, by \cite[Lemma 5]{Lev75} and \cite[(22)]{BLL13}, we can deduce
\begin{align*}
2\pi \sum_{ T<\gamma_{\zeta,a} <T+U } \left( \beta_{\zeta,a} -\frac{ 1 }{ 2 } \right)
&=2\pi\sum_{T<\gamma_{\zeta,a}<T+U}(\beta_{\zeta,a}+b)-2\pi\left(b+\frac{1}{2}\right)\left\{N_{\zeta,a}(T+U)-N_{\zeta,a}(T)\right\}\\
&=O(U),
\end{align*}
where $b\ge2$, $T^{1/2}\le U\le T$, $\rho_{\zeta,a}=\beta_{\zeta,a}+i\gamma_{\zeta,a}$ is an $a$-point of the Riemann zeta function $\zeta(s)$, and $N_{\zeta,a}(T)$ is the number of $a$-points of $\zeta (s)$ with $1<\Im(s)<T$. This estimate means that $a$-points of $\zeta(s)$ are evenly distributed on the left and right concerning the critical line $\sigma=1/2$, since the main terms of $2\pi\sum_{T<\gamma_{\zeta,a}<T+U}(\beta_{\zeta,a}+b)$ and  $2\pi\left(b+\frac{1}{2}\right)\{N_{\zeta,a}(T+U)-N_{\zeta,a}(T)\}$ cancel each other out. 
This is one of the special features of the critical line. From this point of view, we conjecture that $\sigma=r/(2A)$ is an analogue of the critical line of the Riemann zeta function for $\widetilde\zeta(s)$ by Theorem \ref{main4}.

\section{Proof of Theorem \ref{main1}}
\begin{lem} \label{21}
 For $z_1,\dots,z_r\in\mathbb{C}$ with $\Re(z_1),\dots,\Re(z_r)>2$ and $m\in\mathbb{Z}_{>0}$, we have 
 \begin{align*}
  &\sum_{m\le m_1<\cdots<m_r} \frac{ 1 }{ m_1^{z_1}\cdots m_r^{z_r} } \\
  &=
   \begin{cases}
    \displaystyle{
    \frac{ 1 }{ m^{z_1} } +O\left(\frac{ 1 }{ (m+1)^{\Re(z_1)-1} } \right)
    } \qquad (r=1) \\
    \displaystyle{
    \frac{ 1 }{ m^{z_1} \cdots (m+r-1)^{z_r} } 
    +O\left(\frac{ 1 }{ m^{\Re(z_1)} \cdots (m+r-2)^{\Re(z_{r-1})}\cdot (m+r)^{\Re(z_r)-r} }\right) 
    \qquad (r\ge2)
    }. 
   \end{cases}
 \end{align*}
\end{lem}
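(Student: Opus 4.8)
The plan is to argue by induction on $r$, peeling off the smallest index $m_1$ and feeding the remaining $(r-1)$-fold sum back into the induction hypothesis. Throughout I write $\sigma_j:=\Re(z_j)>2$, abbreviate the left-hand side by $S_r(m)$, and record the single analytic input that drives everything: for $\sigma>1$, comparison with $\int_N^{\infty}x^{-\sigma}\,dx$ gives $\sum_{n\ge N}n^{-\sigma}=O(N^{1-\sigma})$, uniformly in $N$. The base case $r=1$ is immediate: splitting off $n=m$ yields $S_1(m)=m^{-z_1}+\sum_{n\ge m+1}n^{-z_1}$, and bounding the tail in absolute value by $\sum_{n\ge m+1}n^{-\sigma_1}=O((m+1)^{1-\sigma_1})$ gives exactly the stated error.

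For the inductive step I would write $S_r(m)=\sum_{m_1\ge m}m_1^{-z_1}\,S'_{r-1}(m_1+1)$, where $S'_{r-1}(n)$ denotes the $(r-1)$-fold sum in the exponents $z_2,\dots,z_r$ over indices $\ge n$. The induction hypothesis replaces $S'_{r-1}(m_1+1)$ by its main term $\prod_{j=2}^{r}(m_1+j-1)^{-z_j}$ plus an error of size $E_{r-1}(m_1+1)$, where
\[
 E_{r-1}(n):=\Bigl(\prod_{j=2}^{r-1}(n+j-2)^{-\sigma_j}\Bigr)(n+r-1)^{-(\sigma_r-(r-1))},
\]
the initial product being empty when $r=2$. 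This splits $S_r(m)=T+U$ with $T=\sum_{m_1\ge m}\prod_{j=1}^{r}(m_1+j-1)^{-z_j}$ and $U=\sum_{m_1\ge m}m_1^{-\sigma_1}E_{r-1}(m_1+1)$. In $T$ the term $m_1=m$ is exactly the claimed main term $\prod_{j=1}^{r}(m+j-1)^{-z_j}$, so it remains to absorb the tail of $T$ and all of $U$ into the target error $E_r(m):=\bigl(\prod_{j=1}^{r-1}(m+j-1)^{-\sigma_j}\bigr)(m+r)^{-(\sigma_r-r)}$.

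Both remaining bounds reduce to a single power count against the tail estimate. Set $\Sigma:=\sigma_1+\cdots+\sigma_r$, so that $E_r(m)\asymp m^{r-\Sigma}$ as $m\to\infty$. The summand of $U$ has denominator of total degree $\Sigma-(r-1)$, whence $U=O\bigl(\sum_{m_1\ge m}m_1^{-(\Sigma-r+1)}\bigr)=O(m^{r-\Sigma})$, of the same order as $E_r(m)$ since $\Sigma-r+1>1$. For the tail of $T$ I would use $\prod_{j=1}^{r}(m_1+j-1)^{-\sigma_j}\le m_1^{-\Sigma}$ to get $\sum_{m_1\ge m+1}\prod_{j=1}^{r}(m_1+j-1)^{-\sigma_j}=O((m+1)^{1-\Sigma})$, which is even smaller than $E_r(m)\asymp m^{r-\Sigma}$ because $r\ge1$. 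Adding the extracted main term to these two error contributions closes the induction.

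The substantive point, and where I expect the only real friction, is the bookkeeping of the index shifts: one must verify at each step that the first $r-1$ factors of the error retain the clean shape $(m+j-1)^{-\sigma_j}$ while the exponent of the final factor drops by exactly one, from $\sigma_r-(r-1)$ in $E_{r-1}$ to $\sigma_r-r$ in $E_r$, and that all implied constants stay uniform in $m$. The uniformity is guaranteed by $\sigma_j>2$, which forces every tail to converge at a geometric rate and renders the finitely many small-$m$ cases harmless; once the inductive scheme and the single tail estimate are in place, the remainder is routine power counting.
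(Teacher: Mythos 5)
Your overall strategy --- induction on $r$, peeling off $m_1$, and comparing tails with integrals --- is exactly the paper's, and your base case and the decomposition $S_r(m)=T+U$ match its proof. The gap is in the error bookkeeping. You collapse every error contribution to a pure power of $m$ and then invoke $E_r(m)\asymp m^{r-\Sigma}$ (with $\Sigma=\sigma_1+\cdots+\sigma_r$ and $E_r(m)$ the stated error term), but the implied constants in that comparison depend on the $z_j$ and grow exponentially in the $\Re(z_j)$: at $m=1$ one has $m^{r-\Sigma}/E_r(m)=\prod_{j=1}^{r-1}j^{\sigma_j}\cdot(r+1)^{\sigma_r-r}$, which is unbounded on the region $\Re(z_j)>2$. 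You explicitly address uniformity in $m$ only; what you actually prove is the lemma with an $O$-constant depending on $z_1,\dots,z_r$, i.e.\ an asymptotic as $m\to\infty$ at fixed $z$. That weaker statement fails in the only place the lemma is used: Lemma 2.2 applies it with $m=1$ and $z_j=a_{\tau(j)}s$, $\sigma\to\infty$, and the entire content of the error term $E_r(1)=\prod_{j=1}^{r-1}j^{-\sigma_j}(r+1)^{r-\sigma_r}$ is that it is smaller than the main term $\prod_j j^{-\sigma_j}$ by the exponentially small factor $(r+1)^r(r/(r+1))^{\sigma_r}$, whereas your version of the error at $m=1$ is merely $O_z(1)$, exponentially larger than the main term. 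The same defect affects your tail of $T$: $(m+1)^{1-\Sigma}$ is not $O(E_r(m))$ uniformly in the $z_j$, since the ratio contains factors $((m+j-1)/(m+1))^{\sigma_j}$ and $((m+r)/(m+1))^{\sigma_r}$.

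The repair, which is what the paper's proof does, is to never pass through powers of $m$: extract the first $r+1$ terms $m_1=m,\dots,m+r$ of each of the two sums explicitly so that the integral comparison only starts at $m+r$; bound each discarded term factor by factor, using $(m+k+j-1)^{-\sigma_j}\le(m+j-1)^{-\sigma_j}$ for $j\le r-1$ and $(m+k+r-1)^{-\sigma_r}\le(m+r)^{-\sigma_r}\le(m+r)^{r-\sigma_r}$ for $k\ge1$; and bound the integral tail by $(m+r)^{1-\Sigma}\le(m+r)^{r-\Sigma}\le E_r(m)$, the last inequality because $m+j-1\le m+r$ for every $j$. Every constant is then bounded in terms of $r$ alone, uniformly in $m$ and in the $z_j$, which is the uniformity the application requires.
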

\begin{proof}
 We prove this lemma by induction on $r$. 
 When $r=1$, we have 
 \begin{align*}
  \sum_{m_1=m}^{\infty} \frac{ 1 }{ m_1^{z_1} }
  &=\frac{ 1 }{ m^{z_1} } + \frac{ 1 }{ (m+1)^{z_1} } +O\left(\int_{m+1}^\infty \frac{ dx }{ x^{z_1} } \right) \\
  &=\frac{ 1 }{ m^{z_1} } +O\left(\frac{ 1 }{ (m+1)^{\Re(z_1)-1} } \right).
 \end{align*}
 In the general case, by the induction hypothesis, we have 
 \begin{align*}
  &\sum_{m\le m_1<\cdots<m_r} \frac{ 1 }{ m_1^{z_1}\cdots m_r^{z_r} } \\
  &=\sum_{m_1=m}^{\infty} \frac{ 1 }{ m_1^{z_1} }
   \sum_{m_1+1\le m_2<\cdots<m_r} \frac{ 1 }{ m_2^{z_2}\cdots m_r^{z_r} } \\
  &=\sum_{m_1=m}^{\infty} 
   \biggl(
   \frac{ 1 }{ m_1^{z_1} \cdots (m_1+r-1)^{z_r} } 
   +O\biggl(\frac{ 1 }{ m_1^{\Re(z_1)} \cdots (m_1+r-2)^{\Re(z_{r-1})}\cdot (m_1+r)^{\Re(z_r)-r+1} }\biggr)
   \biggr).
 \end{align*}
 We notice that
 \begin{align*}
 &\sum_{m_1=m}^{\infty} 
  \frac{ 1 }{ m_1^{z_1} \cdots (m_1+r-1)^{z_r} } \\
 &=\frac{ 1 }{ m^{z_1} \cdots (m+r-1)^{z_r} }
  +\cdots
  +\frac{ 1 }{ (m+r)^{z_1} \cdots (m+2r-1)^{z_r} } \\
 &\quad +O\left(
  \int_{m+r}^\infty \frac{ 1 }{ x^{\Re(z_1)}\cdots (x+r-1)^{\Re(z_r)} } dx
  \right)
 \end{align*}
 holds. 
 Here we see that
 \begin{align*}
  \int_{m+r}^\infty \left| \frac{ 1 }{ x^{z_1} \cdots (x+r-1)^{z_r} } \right| dx
  &\le \int_{m+r}^\infty \left| \frac{ 1 }{ x^{z_1+\cdots +z_r} } \right| dx \\
  &\le \frac{ 1 }{ (m+r)^{\Re(z_1)+\cdots+\Re(z_r)-1} }.
 \end{align*}
 From the above equalities, we get 
 \begin{align*}
 &\sum_{m_1=m}^{\infty} 
  \frac{ 1 }{ m_1^{z_1} \cdots (m_1+r-1)^{z_r} } \\
  &=\frac{ 1 }{ m^{z_1} \cdots (m+r-1)^{z_r} } 
  +O\left(\frac{ 1 }{ m^{\Re(z_1)} \cdots (m+r-2)^{\Re(z_{r-1})}\cdot (m+r)^{\Re(z_r)-r} }\right). 
 \end{align*}
 Similarly, we have  
 \begin{align*}
  &\sum_{m_1=m}^{\infty} 
   \frac{ 1 }{ m_1^{\Re(z_1)} \cdots (m_1+r-2)^{\Re(z_{r-1})}\cdot (m_1+r)^{\Re(z_r)-r+1} } \\
  &=O\biggl( \frac{ 1 }{ m^{\Re(z_1)} \cdots (m+r-2)^{\Re(z_{r-1})}\cdot (m+r)^{\Re(z_r)-r} } \biggr). \qedhere
 \end{align*}
\end{proof}

Let
\begin{align*}
 B&:=\# \{ \tau\in\mathfrak{S}_r \mid (a_{\tau(1)},\dots,a_{\tau(r)}) =(a_1,\dots,a_r) \}, \\
 M&:=\frac{1}{1^{a_1}\cdots r^{a_r}}. 
\end{align*}
\begin{lem} \label{22}
 For $s\in\mathbb{C}$ with $a_r\sigma>2$, there exists a constant $0<c<1$ such that 
 \begin{align*} 
  \widetilde\zeta(s)
  =BM^s (1+O(c^\sigma)).
 \end{align*}
\end{lem}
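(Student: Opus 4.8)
The plan is to extract from each summand $\zeta(a_{\tau(1)}s,\dots,a_{\tau(r)}s)$ its leading term via Lemma \ref{21} (applied with $m=1$ and $z_j=a_{\tau(j)}s$, which is legitimate since $\Re(z_j)=a_{\tau(j)}\sigma\ge a_r\sigma>2$), and then to show that among all $\tau\in\mathfrak{S}_r$ only the $B$ permutations fixing the tuple $(a_1,\dots,a_r)$ contribute the dominant term $M^s$, everything else being exponentially smaller in $\sigma$.

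First I would write, for each $\tau$,
\[
 \zeta(a_{\tau(1)}s,\dots,a_{\tau(r)}s)=M_\tau^s+E_\tau,\qquad
 M_\tau:=\frac{1}{1^{a_{\tau(1)}}\cdots r^{a_{\tau(r)}}},
\]
where $E_\tau$ is the error term coming from the $r\ge2$ branch of Lemma \ref{21}. Summing over $\tau$ gives $\widetilde\zeta(s)=\sum_{\tau}M_\tau^s+\sum_{\tau}E_\tau$. Since $M_\tau>0$, we have $|M_\tau^s|=M_\tau^\sigma=\exp\!\big(-\sigma\sum_{j=1}^r a_{\tau(j)}\log j\big)$, so comparing the $M_\tau$ reduces to comparing the sums $\sum_j a_{\tau(j)}\log j$. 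Because $\log j$ is strictly increasing and $a_1\ge\cdots\ge a_r$, the rearrangement inequality shows this sum is minimized exactly when $(a_{\tau(1)},\dots,a_{\tau(r)})=(a_1,\dots,a_r)$; for these $B$ permutations $M_\tau^s=M^s$ identically, while every other $\tau$ gives $M_\tau<M$ strictly. Hence $\sum_{\tau}M_\tau^s=BM^s+O(M^\sigma c_1^\sigma)$ for some $0<c_1<1$.

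It remains to bound the errors. For the $r\ge2$ case of Lemma \ref{21} with $m=1$ one has $|E_\tau|=O\big(1^{-a_{\tau(1)}\sigma}\cdots(r-1)^{-a_{\tau(r-1)}\sigma}(r+1)^{-(a_{\tau(r)}\sigma-r)}\big)$. Dividing by $M^\sigma$ and taking logarithms, the resulting exponent is $\sigma\Delta_\tau+r\log(r+1)$ with $\Delta_\tau:=\sum_{j=1}^r a_j\log j-\sum_{j=1}^{r-1}a_{\tau(j)}\log j-a_{\tau(r)}\log(r+1)$. I would show $\Delta_\tau<0$ for every $\tau$: replacing the last weight $\log r$ by $\log(r+1)$ strictly increases $\sum_j a_{\tau(j)}(\cdot)$ because $a_{\tau(r)}\ge a_r>0$, and the rearrangement bound above then forces $\sum_{j<r}a_{\tau(j)}\log j+a_{\tau(r)}\log(r+1)>\sum_j a_j\log j$. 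Consequently each $|E_\tau|=O(M^\sigma c_2^\sigma)$ for some $0<c_2<1$.

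Putting $c:=\max\{c_1,c_2\}<1$ and noting that there are only finitely many (at most $2\cdot r!$) non-leading contributions, they sum to $O(M^\sigma c^\sigma)$, which yields $\widetilde\zeta(s)=BM^s+O(M^\sigma c^\sigma)=BM^s\big(1+O(c^\sigma)\big)$ since $|M^s|=M^\sigma$. The hard part will be the strict separation of the leading term from the rest: one must verify both that the rearrangement minimum is attained precisely on the $B$ stabilizing permutations (so that the diagonal is exactly $BM^s$ and not merely of that size) and that the error exponents $\Delta_\tau$ are strictly negative, the latter hinging on the positivity $a_r>0$ together with $\log(r+1)>\log r$.
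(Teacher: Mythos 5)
Your proposal is correct and follows essentially the same route as the paper: apply Lemma \ref{21} with $m=1$ to each $\zeta(a_{\tau(1)}s,\dots,a_{\tau(r)}s)$, identify the $B$ permutations fixing $(a_1,\dots,a_r)$ as the only ones contributing the dominant term $M^s$ (via the ordering $a_1\ge\cdots\ge a_r$, i.e.\ the rearrangement inequality), and absorb everything else into $O(M^\sigma c^\sigma)$. Your explicit verification that the error exponents $\Delta_\tau$ are strictly negative (using $\log(r+1)>\log r$ and $a_{\tau(r)}>0$) is a detail the paper leaves implicit, but it is the intended argument.
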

\begin{proof}
 By Lemma \ref{21}, we have 
 \begin{align*}
  \widetilde\zeta(s) 
  &=\sum_{\tau\in\mathfrak{S}_r} \frac{ 1 }{ 1^{a_{\tau(1)}s} \cdots r^{a_{\tau(r)}s} } 
   +\sum_{\tau\in\mathfrak{S}_r} 
   O\left(\frac{ 1 }{ 1^{a_{\tau(1)}\sigma} \cdots (r-1)^{a_{\tau(r-1)}\sigma}\cdot (r+1)^{a_{\tau(r)}\sigma-r} }\right). 
 \end{align*}
 Since
 \[
  0<\frac{ 1^{a_{1}} \cdots r^{a_{r}} }{ 1^{a_{\tau(1)}} \cdots r^{a_{\tau(r)}}}<1
 \]
 holds if $(a_{\tau(1)},\dots,a_{\tau(r)})\ne(a_1,\dots,a_r)$, we have the desired result. 
\end{proof}

\begin{proof}[Proof of Theorem \ref{main1}]
 By Lemma \ref{22}, there exists a sufficiently large $C_1$ such that $|\widetilde\zeta(s)/(BM^s) -1|<1/2$ for $\sigma>C_1$. 
 This fact implies the theorem for $a=0$.  
 As $\sigma\rightarrow\infty$, $M^s$ tends to $0$.
 Then, for $a\ne0$, there exists $C_1$ such that 
 $|\widetilde\zeta(s)|<|a|/2$ for $\sigma>C_1$, which implies the theorem. 
\end{proof}

\section{Proof of Theorem \ref{main2}} 
Let
\begin{align*}
 C'(\epsilon)
 &:=\bigcup_{n=1}^{\infty} 
  \left\{
   s\in\mathbb{C} \mid \left|s+2n \right|\le\epsilon 
  \right\}, \\
 D'(p)
 &=\left\{
  (s_1,s_2)\in\mathbb{C}^2 \mid |s_1|\ge|s_2|, \sigma_1\le-pt_1^2, \sigma_2\le-pt_2^2, t_1\ge0,t_2\ge0 
  \right\}.
\end{align*}
\begin{lem} \label{3.2}
 Let $\epsilon>0$ and $p>0$.
 For $(s_1,s_2)\in D'(p)$ with $\sigma_1, \sigma_2$ sufficiently small and $s_1+s_2\notin C'(\epsilon)$, there exists $c>1$ such that  
 \[
  \frac{\zeta(s_1+s_2)}{\zeta(s_1)\zeta(s_2)} 
  \gg c^{-\sigma_2}.
 \]
\end{lem}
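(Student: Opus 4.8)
The plan is to reduce everything to the classical functional equation of the Riemann zeta function in its asymmetric form
\[
 \zeta(s)=2^{s}\pi^{s-1}\sin\!\Big(\frac{\pi s}{2}\Big)\Gamma(1-s)\zeta(1-s),
\]
applied to each of $\zeta(s_1+s_2)$, $\zeta(s_1)$, and $\zeta(s_2)$. Because $\sigma_1,\sigma_2$ are very negative, the shifted real parts $1-\sigma_1$, $1-\sigma_2$, $1-\sigma_1-\sigma_2$ are large and positive, so $\zeta(1-s_1)$, $\zeta(1-s_2)$, $\zeta(1-s_1-s_2)$ are all $1+o(1)$ and contribute a factor bounded above and below by positive constants. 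The powers of $2$ and $\pi$ combine to a harmless constant, since their linear-in-$\sigma$ exponents cancel. Hence
\[
 \frac{\zeta(s_1+s_2)}{\zeta(s_1)\zeta(s_2)}
 \asymp
 \frac{\sin\!\big(\tfrac{\pi(s_1+s_2)}{2}\big)}{\sin\!\big(\tfrac{\pi s_1}{2}\big)\sin\!\big(\tfrac{\pi s_2}{2}\big)}
 \cdot
 \frac{\Gamma(1-s_1-s_2)}{\Gamma(1-s_1)\Gamma(1-s_2)},
\]
and it remains to bound each factor from below.

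For the sine factor I would use the identity $|\sin(\pi s/2)|^{2}=\sin^{2}(\pi\sigma/2)+\sinh^{2}(\pi t/2)$. The hypothesis $s_1+s_2\notin C'(\epsilon)$ keeps $s_1+s_2$ away from the negative even integers (the trivial zeros), which bounds the numerator below by a positive constant depending only on $\epsilon$. The crucial point is that the numerator's imaginary part is $t_1+t_2$, so its potential exponential growth $e^{\pi(t_1+t_2)/2}$ matches that of the denominator $|\sin(\tfrac{\pi s_1}{2})\sin(\tfrac{\pi s_2}{2})|\le\cosh(\tfrac{\pi t_1}{2})\cosh(\tfrac{\pi t_2}{2})$. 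Retaining instead the $\sinh$ term in the numerator gives the lower bound $\tanh(\tfrac{\pi t_1}{2})+\tanh(\tfrac{\pi t_2}{2})$, which is bounded away from $0$ as soon as $\max(t_1,t_2)\ge1$; combining this with the constant bound from $C'(\epsilon)$ (used when both $t_j$ are small) shows that the sine factor is $\ge$ a positive constant, uniformly in the region. The point to stress is that one must \emph{not} bound numerator and denominator separately by their extreme values, as that would cost a spurious factor $e^{-\pi(t_1+t_2)/2}$.

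The exponential lower bound comes from the Gamma factor. Writing $x_j:=1-\sigma_j$ and applying Stirling's formula, I would show
\[
 \log\left|\frac{\Gamma(1-s_1-s_2)}{\Gamma(1-s_1)\Gamma(1-s_2)}\right|
 = x_1\log\!\Big(1+\frac{x_2}{x_1}\Big)+x_2\log\!\Big(1+\frac{x_1}{x_2}\Big)+O(\log x_1),
\]
the corrections from the imaginary parts entering only at the bounded level because $|t_j|\le\sqrt{|\sigma_j|/p}$ is of smaller order than $x_j$. From $|s_1|\ge|s_2|$ together with $t_j^2\le|\sigma_j|/p$ one deduces $x_2\le x_1+O(1)$, so the main term is at least $x_2\log(1+x_1/x_2)\ge x_2\log2$; when $x_1$ is much larger than $x_2$ the same expression behaves like $x_2\log(x_1/x_2)$, which comfortably absorbs the $O(\log x_1)$ error. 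Assembling the three pieces yields
\[
 \log\left|\frac{\zeta(s_1+s_2)}{\zeta(s_1)\zeta(s_2)}\right|\ge |\sigma_2|\log c
\]
for a suitable $c>1$, once $\sigma_1,\sigma_2$ are sufficiently small, which is the assertion.

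The main obstacle I anticipate is the Stirling analysis of the Gamma quotient: one must confirm that the contributions of the (non-negligible, possibly as large as $\sqrt{|\sigma_j|}$) imaginary parts $t_j$ enter only boundedly, and then verify the elementary but slightly delicate inequality $x_2\log(1+x_1/x_2)-O(\log x_1)\ge|\sigma_2|\log c$ in both regimes $x_1\asymp x_2$ and $x_1\gg x_2$. By comparison the sine factor, once one resists bounding its numerator and denominator by their separate extremes, and the $\zeta(1-\,\cdot\,)$ corrections are routine.
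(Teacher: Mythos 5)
Your proposal is correct and follows essentially the same route as the paper: apply the asymmetric functional equation to all three zeta values, note that the exponential growth of the sine factors cancels between numerator and denominator (with the hypothesis $s_1+s_2\notin C'(\epsilon)$ controlling the numerator near the trivial zeros), use $\sigma_j\le -pt_j^2$ to reduce the argument/imaginary contributions of the Gamma factors to $O(1)$, and extract the bound $c^{-\sigma_2}$ from the modulus of the Gamma quotient together with $|s_1|\ge|s_2|$. The only difference is presentational: the paper works multiplicatively with $|(1-s)^{1/2-s}|$ and the factorization $\bigl|1-\tfrac{s_2}{1-s_1}\bigr|^{1/2-\sigma_1}\cdot|1-s_1-s_2|^{-\sigma_2}/|1-s_2|^{1/2-\sigma_2}$, whereas you take logarithms and write the same main term as $x_1\log(1+x_2/x_1)+x_2\log(1+x_1/x_2)$.
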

\begin{proof}
 We first note that
 \begin{align*}
  \zeta(s)
  &=2^s\pi^{s-1}\sin\left(\frac{\pi s}{2}\right)\Gamma(1-s) \zeta(1-s) \\
  &=2^{1/2+s}\pi^{s-1/2} e^{s-1} (1-s)^{1/2-s} \sin\left(\frac{\pi s}{2}\right) \zeta(1-s) \left(1+O\left(\frac{1}{|1-s|}\right)\right) 
 \end{align*}
 holds by the well-known functional equation and the Stirling formula. 
 Let
 \begin{align*}
  \alpha_1
  &:=\prod_{j=1}^{2} \left( |1-s_j|^{1/2-\sigma_j} \exp(t_j (\arg(1-s_j)+\pi/2)) \right), \\
  \alpha_2
  &:=|1-s_1-s_2|^{1/2-\sigma_1-\sigma_2} \exp((t_1+t_2) (\arg(1-s_1-s_2)+\pi/2)).  
 \end{align*}
 By the assumption $s_1+s_2\notin C'(\epsilon)$, we have 
 \[
  \frac{\zeta(s_1+s_2)}{\zeta(s_1)\zeta(s_2)}
  \gg \frac{ \alpha_2 }{ \alpha_1 }. 
 \]
 Since there exists $c>1$ such that $|1-s_1-s_2|\ge c|1-s_2|$, we have
 \begin{align} \label{111}
  \begin{split}
   \frac{ |1-s_1-s_2|^{1/2-\sigma_1-\sigma_2} }{ |1-s_1|^{1/2-\sigma_1} |1-s_2|^{1/2-\sigma_2} } 
   &=\left| 1+ \frac{ -s_2 }{ 1-s_1 } \right|^{1/2-\sigma_1} \frac{ | 1-s_1-s_2 |^{-\sigma_2} }{ | 1-s_2 |^{1/2-\sigma_2} } \\
   &\ge \left| 1+ \frac{ -s_2 }{ 1-s_1 } \right|^{1/2-\sigma_1} \frac{ c^{-\sigma_2} }{ | 1-s_2 |^{1/2} }.
  \end{split}
 \end{align}
 On the other hand, since $\sigma_1\le-pt_1^2$ and $\sigma_2\le-pt_2^2$, we have
 \begin{align} \label{112}
  \begin{split}
   &\exp\left( (t_1+t_2)\arg(1-s_1-s_2) -t_1\arg(1-s_1) -t_2\arg(1-s_2) \right) \\
   &=\exp\left( 
    -(t_1+t_2)\arctan \frac{ t_1+t_2 }{ 1-\sigma_1-\sigma_2 } 
    +t_1\arctan \frac{ t_1 }{ 1-\sigma_1 } 
    +t_2\arctan \frac{ t_2 }{ 1-\sigma_2 }
    \right) \\
   &=\exp\biggl( 
    -(t_1+t_2) \frac{ t_1+t_2 }{ 1-\sigma_1-\sigma_2 } 
    +t_1 \frac{ t_1 }{ 1-\sigma_1 } 
    +t_2 \frac{ t_2 }{ 1-\sigma_2 } \\
    &\qquad\quad 
     +O\biggl( \frac{ t_1^4 }{ (1-\sigma_1)^3 } \biggr)
     +O\biggl( \frac{ t_2^4 }{ (1-\sigma_2)^3 } \biggr)
     +O\biggl( \frac{ (t_1+t_2)^4 }{ (1-\sigma_1-\sigma_2)^3 } \biggr)
   \biggr) \\
   &=\exp (O(1)).
  \end{split}
 \end{align}
 By \eqref{111} and \eqref{112}, we have
 \begin{align*}
 \frac{ \alpha_2 }{ \alpha_1 } 
  &=\frac{ |1-s_1-s_2|^{1/2-\sigma_1-\sigma_2} }{ |1-s_1|^{1/2-\sigma_1} |1-s_2|^{1/2-\sigma_2} } \\
   &\quad \times \exp\left( (t_1+t_2)\arg(1-s_1-s_2) -t_1\arg(1-s_1) -t_2\arg(1-s_2) \right) \\
  &\gg \left| 1- \frac{ s_2 }{ 1-s_1 } \right|^{1/2-\sigma_1} \frac{ c_0^{-\sigma_2} }{ | 1-s_2 |^{1/2} } 
 \end{align*}
 for $c_0>1$.
 Since
 \[
  \frac{-s_2}{1-s_1}
  =\frac{-s_2 (1-\overline{s_1})}{(1-\sigma_1)^2+t_1^2}
 \]
 and 
 \[
  -\frac{\pi}{2} < \arg (-s_2 (1-\overline{s_1})) <\frac{\pi}{2},
 \]
 we have
 \[
  \left| 1- \frac{ s_2 }{ 1-s_1 } \right|>1.
 \]
 There exist $c>1$ such that
 \[
  c^{-\sigma_2}>\frac{ c_0^{-\sigma_2} }{ | 1-s_2 |^{1/2} }.
 \]
 This finishes the proof. 
\end{proof}

\begin{lem} \label{1111}
 For $p>0$ and small $\epsilon>0$, there exists a constant $c<0$ such that 
 \begin{align*}
  \widetilde\zeta(s)
  &\asymp \zeta(As)
 \end{align*}
 for $s\in D(c,p) \setminus C(\epsilon)$. 
\end{lem}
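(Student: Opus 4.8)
The plan is to extract from Hoffman's formula the single term that survives in the far-left half-plane and to show that every other term is exponentially smaller. Separating off the partition of size $l=1$, we have $\widetilde\zeta(s) = (-1)^{r-1}(r-1)!\,\zeta(As) + \sum_{l\ge 2}\sum_{P_1,\dots,P_l}(-1)^{r-l}\prod_{j=1}^l(\#(P_j)-1)!\cdot\zeta(w_j s)$, where $w_j := \sum_{j'\in P_j}a_{j'} > 0$ satisfy $\sum_j w_j = A$ and, crucially, $w_j < A$ for every $j$ once $l\ge 2$. Since the leading coefficient $(-1)^{r-1}(r-1)!$ is nonzero and there are only finitely many partitions, it suffices to prove that for each $l\ge 2$ term one has $\prod_{j=1}^l\zeta(w_j s) = o(\zeta(As))$ uniformly on $D(c,p)\setminus C(\epsilon)$ as $c\to-\infty$; then $\widetilde\zeta(s)/\zeta(As)\to(-1)^{r-1}(r-1)!$ and $\asymp$ follows.

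First I would feed each factor into the functional-equation/Stirling expansion already displayed in the proof of Lemma~\ref{3.2}. For any weight $w>0$ this gives $\zeta(ws) = \alpha_w(s)\,\sin(\pi ws/2)\,\zeta(1-ws)\,(1+O(1/|1-ws|))$, where $\alpha_w(s)$ collects the power and Gamma factors. In $D(c,p)$ we have $\Re(1-ws)=1-w\sigma\to+\infty$, so $\zeta(1-ws)=1+o(1)$, and a direct computation of $\log|\alpha_w(s)|$ yields the leading behaviour $\log|\alpha_w(s)| = w|\sigma|\log|\sigma| + w|\sigma|\log\frac{w}{2\pi e} + o(|\sigma|)$, the error being controlled by the parabola bound $|t|\le\sqrt{|\sigma|/p}$.

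The comparison then splits into a power-Gamma part and a sine part. For the power-Gamma part, summing the previous display over the $l$ factors and subtracting the same quantity for $\zeta(As)$ gives $\log\frac{\prod_j|\alpha_{w_j}(s)|}{|\alpha_A(s)|} = -|\sigma|\bigl(A\log A-\sum_j w_j\log w_j\bigr)+o(|\sigma|)$, and the elementary inequality $\sum_j w_j\log w_j < \bigl(\sum_j w_j\bigr)\log A = A\log A$ (strict because each $w_j<A$) shows this tends to $-\infty$ linearly in $|\sigma|$. For the sine part I would bound the numerator trivially by $\prod_j|\sin(\pi w_j s/2)|\le\prod_j e^{\pi w_j|t|/2}=e^{\pi A|t|/2}$, while the denominator is bounded below using $s\notin C(\epsilon)$: from $|\sin(\pi As/2)|^2=\sin^2(\pi A\sigma/2)+\sinh^2(\pi At/2)$ and the exclusion of the $\epsilon$-disks about $-2n/A$, one gets $|\sin(\pi As/2)|\gg_\epsilon 1$. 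The parabola condition makes $e^{\pi A|t|/2}=e^{O(\sqrt{|\sigma|})}$ subexponential, so with $c_0=A\log A-\sum_j w_j\log w_j>0$ the whole ratio is $\ll_\epsilon e^{-c_0|\sigma|}e^{O(\sqrt{|\sigma|})}\to0$. Choosing $c<0$ negative enough (depending on $p$, $\epsilon$, and the $a_j$) makes the total $l\ge 2$ contribution smaller than $\tfrac12(r-1)!\,|\zeta(As)|$, finishing the proof.

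The main obstacle is the sine/trivial-zero bookkeeping. It is tempting to prove the estimate by iterating Lemma~\ref{3.2} one factor at a time, merging $\zeta(W_{k-1}s)\zeta(w_k s)$ into $\zeta(W_k s)$; but this requires each partial sum $W_k s=(w_1+\dots+w_k)s$ to avoid $C'(\epsilon)$, whereas $s\notin C(\epsilon)$ only keeps the full sum $As$ away from the trivial zeros, and an intermediate $W_k s$ can sit exactly on one, where the ratio bound fails. I sidestep this by never merging: the product is bounded \emph{above} with no zero-avoidance needed (the sine factors only contribute the harmless upper bound $e^{\pi A|t|/2}$), and only the single leading $\zeta(As)$ requires a lower bound, which is precisely what the hypothesis $s\notin C(\epsilon)$ provides. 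The one genuinely delicate estimate is thus the uniform lower bound $|\sin(\pi As/2)|\gg_\epsilon 1$ off the disks, together with the verification that the sine growth $e^{\pi A|t|/2}$ is dominated by the power-Gamma decay $e^{-c_0|\sigma|}$ — which is exactly why the parabola $\sigma\le -pt^2$ is imposed.
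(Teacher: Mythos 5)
Your proof is correct, but it takes a genuinely different route from the paper. The paper's entire proof is one sentence: ``By applying the previous lemma repeatedly, we have the result,'' i.e.\ it iterates Lemma~\ref{3.2}, merging the factors $\zeta(w_1s),\zeta(w_2s),\dots$ of each Hoffman term two at a time into $\zeta(W_ks)$ until reaching $\zeta(As)$. You instead never merge: you isolate the $l=1$ term $(-1)^{r-1}(r-1)!\,\zeta(As)$, bound every $l\ge2$ product from above factor by factor via the functional equation and Stirling (where the sine factors need no zero-avoidance, only the trivial bound $e^{\pi w_j|t|/2}$), and reserve the lower bound — hence the hypothesis $s\notin C(\epsilon)$ — for the single factor $\zeta(As)$. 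The decisive comparison is then the strict convexity-type inequality $\sum_j w_j\log w_j<A\log A$ giving exponential decay $e^{-c_0|\sigma|}$ that swamps the $e^{O(\sqrt{|\sigma|})}$ from the sines on the parabola $\sigma\le-pt^2$. Your objection to the iterative route is well taken: for $r\ge3$ an intermediate partial sum $W_ks$ can land on a trivial zero $-2n$, where $\zeta(W_ks)=0$ and the ratio bound of Lemma~\ref{3.2} (which requires $W_ks\notin C'(\epsilon)$) is simply unavailable, even though the final conclusion is still true there. So your argument is not just an alternative but actually closes a gap that the paper's one-line proof leaves open; the cost is that you must redo the Stirling bookkeeping for each weight $w_j$ rather than reusing the two-factor lemma as a black box. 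The computations you sketch (the $\log|\alpha_w(s)|=w|\sigma|\log|\sigma|+w|\sigma|\log\frac{w}{2\pi e}+o(|\sigma|)$ expansion, the cancellation of the $|\sigma|\log|\sigma|$ terms using $\sum_jw_j=A$, and the lower bound $|\sin(\pi As/2)|\gg_\epsilon1$ off the disks) all check out.
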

\begin{proof}
 By applying the previous lemma repeatedly, we have the result. 
\end{proof}
\begin{proof}[Proof of Theorem \ref{main2}]
 For $s\in D(c,p) \setminus C(\epsilon)$ with $t\ge0$, we have 
 \begin{align*}
  |\widetilde\zeta(s)|
  &\gg |\zeta(As)| \\
  &\gg |1-As|^{1/2-A\sigma} \exp \left(At \left(\arg(1-As)+\frac{ \pi }{ 2 }\right)\right) \\
  &\gg |1-As|^{1/2-A\sigma}. 
 \end{align*} 
 Then, for sufficiently small $\sigma$, we get $|\widetilde\zeta(s)|>|a|$. 

 Applying Rouch\'e's theorem for $\zeta(As)$ and $\widetilde\zeta(s)-\zeta(As)-a$, we find that the functions 
 $\widetilde\zeta(s)-a$ and $\zeta(As)$ have the same number of zeros in each disk of $C(\epsilon)$. 
 Since the function $\zeta(As)$ has exactly one zero in each disk, we find the result. 
\end{proof}

\section{Proof of Theorems \ref{main2+} and \ref{main3}} 
For $s\in\mathbb{C}$, we define
\begin{align*}
 G(s):=
 \begin{cases}
  \displaystyle{ \frac{ \widetilde\zeta(s) }{ BM^s }} &(a=0), 
  \vspace{0.5em} \\
  \displaystyle{ \frac{ \widetilde\zeta(s)-a }{ -a }} &(a\ne0).
 \end{cases}
\end{align*}

Let $x>0$ be large.
For large $T$, let each side of positive oriented rectangle with vertices $-y+it_0,x+it_0,x+iT,-y+iT$ be $R_1,R_2,R_3,R_4$ counting from the bottom. 
We note that 
\begin{align} \label{note}
 \begin{split} 
  \frac{ G'(s) }{ G(s) }=
  \begin{cases}
   \displaystyle{ \frac{ \widetilde\zeta'(s) }{ \widetilde\zeta(s) }-\log M} &(a=0), 
   \vspace{0.5em} \\
   \displaystyle{ \frac{ \widetilde\zeta'(s) }{ \widetilde\zeta(s)-a }} &(a\ne0). 
  \end{cases}
 \end{split}
\end{align}
We also note that
$\widetilde\zeta(s)=a$ if and only if $G(s)=0$. 
We prove Theorem \ref{main3} by using the following formula: 
\begin{align*}
 N_{y}(a;T)
 =\frac{ 1 }{ 2\pi i } \left( \int_{R_1} +\int_{R_2} +\int_{R_3} +\int_{R_4} \right) \frac{ G'(s) }{ G(s) } ds 
  +O(1).
\end{align*}

\subsection{Calculations for $R_3$} 
Let $q(x,u)$ be the number of times that the value of $\Re(G(z))$ is $0$ when the variable $z$ moves from $x+iT$ to $u+iT$ parallel to the real axis, and for $z\in\mathbb{C}$, let  
\[
 X(z):=X_T(z):=\frac{ G(z+iT) +G(z-iT) }{ 2 }.
\]
\begin{lem}
 For $u,T\in\mathbb{R}$ with $u<x$, we have   
 \[
  |\arg G(u+iT)|\le (q(x,u)+2)\pi.
 \]
\end{lem}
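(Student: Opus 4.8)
The plan is to use a Backlund-type argument, following the continuous variation of $\arg G(\sigma+iT)$ as $\sigma$ decreases from $x$ to $u$ and controlling it through the sign changes of $\Re G$ recorded by $q(x,u)$.

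First I would pin down the starting value at $\sigma=x$. Since $x$ is large, Lemma \ref{22} gives, in the case $a=0$, $G(x+iT)=\widetilde\zeta(x+iT)/(BM^{x+iT})=1+O(c^x)$, while in the case $a\ne0$ it gives $\widetilde\zeta(x+iT)\to0$ as $x\to\infty$ (since $0<M<1$), so that $G(x+iT)=(\widetilde\zeta(x+iT)-a)/(-a)=1+o(1)$. In either case, for $x$ large enough (independent of $T$) one has $|G(x+iT)-1|<1/2$, hence $\Re G(x+iT)>0$ and $|\arg G(x+iT)|<\pi/2$; I take the principal value here as the initial value and then continue $\arg G(\sigma+iT)$ continuously in $\sigma$. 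I also tacitly use that $T$ is chosen so that $G$ has no zero on the segment, so that $\arg$ is well defined throughout; this is the usual harmless restriction on $T$.

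Next I would partition the segment by the zeros of $\Re G$. By definition there are $q=q(x,u)$ values of $\sigma$ in $(u,x)$ at which $\Re G(\sigma+iT)=0$; since $\Re G(\,\cdot\,+iT)$ is real-analytic in $\sigma$ and not identically zero (it is near $1$ at $x$), these are finite in number and cut $[u,x]$ into at most $q+1$ subintervals on each of which $\Re G$ keeps a constant sign. On such a subinterval the value $G(\sigma+iT)$ lies in one of the open half-planes $\{\Re>0\}$ or $\{\Re<0\}$, neither of which contains the origin; consequently the continuously varying argument is confined to an open interval of length $\pi$, and at each endpoint that is a zero of $\Re G$ it equals an odd multiple of $\pi/2$. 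Hence the net change of $\arg G$ across any one subinterval is at most $\pi$ in absolute value.

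Summing over the at most $q+1$ subintervals, the total variation of $\arg G(\sigma+iT)$ from $\sigma=x$ to $\sigma=u$ is at most $(q+1)\pi$. Combining this with the initial bound $|\arg G(x+iT)|<\pi/2<\pi$ yields
\[
 |\arg G(u+iT)|\le|\arg G(x+iT)|+(q+1)\pi<(q+2)\pi,
\]
which is the assertion. The one point requiring care is the half-plane confinement: I must ensure that the continuous argument can leave a band of width $\pi$ only by crossing a zero of $\Re G$, so that band changes are accounted for by the counted crossings, and I should note that tangential zeros of $\Re G$—where it vanishes without changing sign—are harmless, since they are still counted by $q$ but do not force the argument out of its band and thus only improve the inequality. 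I expect this bookkeeping of the bands, rather than any hard analytic estimate, to be the main obstacle.
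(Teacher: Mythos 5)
Your argument is correct and is precisely the standard Backlund-type bookkeeping that the paper's one-line proof (``this lemma is derived by considering the argument,'' starting from $|\arg G(x+iT)|\le\pi/2$) is alluding to: the $q(x,u)$ zeros of $\Re G$ cut the segment into at most $q+1$ pieces on each of which the continuous argument varies by at most $\pi$. Your write-up simply supplies the details the paper omits, so there is nothing to add.
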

\begin{proof} 
 Since $|\arg G(x+iT)|\le\pi/2$, this lemma is derived by considering the argument.
\end{proof}
We also let $n(K)$ be the number of zeros of $X(z)$ inside the circle with center $x$ and radius $K$.
Since $X(u)=\Re (G(u+iT))$,  we have $q(x,-y)\le n(x+y)$.  
\begin{lem}
 We have 
 \[
  q(x,-y)\le (x+y+2)\int_{0}^{x+y+2} \frac{ n(r) }{ r } dr.
 \]
\end{lem}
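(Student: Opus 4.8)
The plan is to reduce the claim to the inequality $q(x,-y)\le n(x+y)$ already recorded above, and then to convert the pointwise count $n(x+y)$ into the stated integral by exploiting the monotonicity of $r\mapsto n(r)$. Since $n(r)$ counts the zeros of $X(z)$ in the disk of radius $r$ centered at $x$, it is a non-decreasing function of $r$; in particular $n(r)\ge n(x+y)$ for every $r$ in the window $[x+y,\,x+y+2]$.

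First I would discard the part of the integral over $[0,x+y]$ and estimate the remaining piece from below using this monotonicity:
\[
 \int_0^{x+y+2}\frac{n(r)}{r}\,dr
 \ge\int_{x+y}^{x+y+2}\frac{n(r)}{r}\,dr
 \ge n(x+y)\int_{x+y}^{x+y+2}\frac{dr}{r}
 =n(x+y)\log\frac{x+y+2}{x+y}.
\]
Next I would invoke the elementary inequality $\log(1+u)\ge u/(1+u)$, valid for $u>0$, with $u=2/(x+y)$, noting that $x>0$ and $y>0$ so that $x+y>0$. This yields
\[
 \log\frac{x+y+2}{x+y}=\log\Bigl(1+\frac{2}{x+y}\Bigr)\ge\frac{2}{x+y+2}\ge\frac{1}{x+y+2}.
\]
Chaining the two displays gives $n(x+y)\le(x+y+2)\int_0^{x+y+2}\tfrac{n(r)}{r}\,dr$, and combining with $q(x,-y)\le n(x+y)$ completes the argument.

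I do not expect a genuine obstacle here, as the argument is essentially routine. The only point that requires attention is the choice to integrate up to $x+y+2$ rather than $x+y$: it is precisely this extra length-$2$ window that, together with the monotonicity of $n$, produces the factor $\log\frac{x+y+2}{x+y}$, whose reciprocal is in turn controlled by $x+y+2$. Integrating only to $x+y$ would collapse the window and leave no room to run the estimate.
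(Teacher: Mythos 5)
Your argument is correct and follows essentially the same route as the paper: discard the lower part of the integral, use the monotonicity of $n(r)$ together with $q(x,-y)\le n(x+y)$, and control the factor coming from $1/r$ by $1/(x+y+2)$. The only (immaterial) difference is that the paper integrates over $[x+y+1,x+y+2]$, where $1/r\ge 1/(x+y+2)$ gives the constant directly, while you integrate over $[x+y,x+y+2]$ and handle the resulting logarithm with $\log(1+u)\ge u/(1+u)$.
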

\begin{proof}
 We have 
 \begin{align*}
  \int_{0}^{x+y+2} \frac{ n(r) }{ r } dr 
  &\ge \int_{x+y+1}^{x+y+2} \frac{ n(r) }{ r } dr \\
  &\ge \frac{ 1 }{ x+y+2 } \int_{x+y+1}^{x+y+2} n(r) dr \\
  &\ge \frac{ n(x+y+1) }{ x+y+2 } \\
  &\ge \frac{ q(x,-y) }{ x+y+2 }. \qedhere
 \end{align*}
\end{proof}
\begin{lem}
 For $a\in\mathbb{C}$ and $y>0$, we have
 \[
  \int_{0}^{x+y+2} \frac{ n(r) }{ r } dr
  \ll \log T.
 \]
\end{lem}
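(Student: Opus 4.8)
The plan is to apply Jensen's formula to the function $X(z)$ on the disk centered at $z_0=x$ of radius $R:=x+y+2$. For fixed $x,y$ and all sufficiently large $T$, the function $X(z)$ is analytic on this closed disk: by Hoffman's formula the only singularities of $\widetilde\zeta$ come from the poles of the factors $\zeta\bigl(\sum_{j'\in P_j}a_{j'}s\bigr)$, which lie on the positive real axis, whereas every $z$ in the disk satisfies $|\Im(z\pm iT)|\ge T-R$, so $z\pm iT$ stays far from those poles. Jensen's formula then gives
\[
 \int_0^{R}\frac{n(r)}{r}\,dr
 =\frac{1}{2\pi}\int_0^{2\pi}\log\bigl|X(x+Re^{i\theta})\bigr|\,d\theta-\log|X(x)|,
\]
so it suffices to bound the boundary mean from above by $O(\log T)$ and $-\log|X(x)|$ from above by $O(1)$.

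For the term $-\log|X(x)|$ I would use Lemma \ref{22}. Taking $x$ large with $a_rx>2$, Lemma \ref{22} shows that $\widetilde\zeta(x\pm iT)=BM^{x\pm iT}(1+O(c^{x}))$ uniformly in $T$; hence $G(x\pm iT)=1+O(c^{x})$ in the case $a=0$, while for $a\ne0$ we have $M^{x}\to0$, so $\widetilde\zeta(x\pm iT)\to0$ and again $G(x\pm iT)=1+O(M^{x})$. In either case $|X(x)|=\bigl|\frac{1}{2}(G(x+iT)+G(x-iT))\bigr|\ge\frac{1}{2}$ once $x$ is fixed large enough, uniformly in $T$, so $-\log|X(x)|=O(1)$.

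It remains to bound the boundary mean. Using $\log|X|\le\log^+|X|$ pointwise, it is enough to produce an upper bound $|X(z)|\ll T^{O(1)}$ on the circle $|z-x|=R$; this yields $\log^+|X(x+Re^{i\theta})|\ll\log T$ and hence a mean of size $O(\log T)$. On this circle $\Re(z\pm iT)$ lies in the fixed interval $[-(y+2),\,2x+y+2]$ while $|\Im(z\pm iT)|\asymp T$. By Hoffman's formula $\widetilde\zeta$ is a finite sum of products of factors $\zeta(cs)$ with $c=\sum_{j'\in P_j}a_{j'}>0$, so I would bound each factor by the standard convexity (polynomial growth) estimate for the Riemann zeta function in a vertical strip: for bounded $\Re(cs)$ and $|\Im(cs)|\asymp T$ one has $\zeta(cs)\ll T^{O(1)}$, via the functional equation together with Stirling's formula when $\Re(cs)$ is negative. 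Multiplying the finitely many factors and dividing by $BM^{z\pm iT}$ (whose modulus is bounded above and below since $\Re(z\pm iT)$ is bounded), or by the constant $-a$, gives $|G(z\pm iT)|\ll T^{O(1)}$ and therefore $|X(z)|\ll T^{O(1)}$.

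The main obstacle is this last step: obtaining the polynomial-in-$T$ growth bound for $\widetilde\zeta$ uniformly across the whole strip $-(y+2)\le\Re(w)\le 2x+y+2$, in particular for negative real part where the Dirichlet series diverges and one must pass through the functional equation and the Stirling approximation, keeping the exponent of $T$ controlled uniformly over all the $\zeta(cs)$ factors. Granting this bound, inserting the $O(1)$ estimate for $-\log|X(x)|$ and the $O(\log T)$ estimate for the boundary mean into Jensen's formula yields $\int_0^{x+y+2}n(r)/r\,dr\ll\log T$.
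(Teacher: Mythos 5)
Your proposal follows essentially the same route as the paper: Jensen's formula on the disk of radius $x+y+2$ centered at $x$, the bound $|X(z)|\le|G(z+iT)|+|G(z-iT)|$, and a polynomial-in-$T$ growth estimate for $\widetilde\zeta$ in the vertical strip obtained from the standard convexity bounds $\mu(\sigma)\le 0,\ 1/2-\sigma/2,\ 1/2-\sigma$ (which the paper cites from Titchmarsh), so the step you flag as the main obstacle is exactly this standard estimate and poses no difficulty. You are in fact slightly more careful than the paper, which leaves the analyticity of $X$ on the disk and the bound $-\log|X(x)|=O(1)$ implicit.
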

\begin{proof}
 By Jensen's theorem, we have 
 \[
  \int_{0}^{x+y+2} \frac{ n(r) }{ r } dr
  = \frac{ 1 }{ 2\pi } \int_{0}^{2\pi} \log|X(x+(x+y+2)e^{i\theta})| d\theta
   -\log|X(x)|.
 \]
 We note that
 \[
  |X(x+(x+y+2)e^{i\theta})|
  \le |G(x+(x+y+2)e^{i\theta}+iT)| +|G(x+(x+y+2)e^{i\theta}-iT)|.
 \] 
 For each $\sigma$, we define $\mu(\sigma)$ as the lower bound of $\xi$ such that 
 \[
  \zeta(\sigma+it)=O(|t|^\xi).
 \] 
 It is well-known that
 \begin{align*}
  \mu(\sigma)\le
  \begin{cases}
   0 &(\sigma>1), \\
   1/2-\sigma/2 &(0\le\sigma\le 1), \\
   1/2-\sigma &(\sigma<0)   
  \end{cases}
 \end{align*}
 holds (see \cite[Section 5.1]{Tit86}).
 Since there exist some $L$ such that
 \[
  \widetilde\zeta(x+(x+y+2)e^{i\theta}+iT)
  \ll T^L,
 \]
 we have 
 \begin{align*}
  |G(x+(x+y+2)e^{i\theta}+iT)|
  \ll T^L.
 \end{align*}
 Then we get
 \[
  |X(x+(x+y+2)e^{i\theta}+iT)|
  \ll T^L. 
 \]
 Thus we find the lemma. 
\end{proof}
\begin{lem} \label{mo3b}
 For $a\in\mathbb{C}$ and $y>0$, we have
 \begin{align*}
  \Im\int_{R_3} \frac{ G'(s) }{ G(s) } ds 
  \ll \log T.
 \end{align*}
\end{lem}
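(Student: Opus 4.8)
The plan is to interpret the imaginary part of the integral as the total variation of $\arg G$ along the top edge $R_3$, and then to control this variation by the number of sign changes of $\Re G$, which the three preparatory lemmas above have already bounded. Writing $\frac{G'}{G}\,ds = d\log|G| + i\,d\arg G$, we have
\[
 \Im\int_{R_3} \frac{G'(s)}{G(s)}\,ds = [\arg G]_{R_3},
\]
the net change of a continuous branch of $\arg G$ as $s=u+iT$ runs from $u=x$ down to $u=-y$. Since $R_3$ is traversed from $x+iT$ to $-y+iT$, this net change equals $\arg G(-y+iT)-\arg G(x+iT)$ for the continuous branch anchored at the right endpoint, so
\[
 \left| \Im\int_{R_3} \frac{G'(s)}{G(s)}\,ds \right| \le |\arg G(-y+iT)| + |\arg G(x+iT)|.
\]

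For the second term, at $\sigma=x$ large the function $G$ is close to $1$ (by Lemma \ref{22} in the case $a=0$, and directly from the definition when $a\ne0$, since $\widetilde\zeta(s)\to0$ as $\sigma\to\infty$), so $|\arg G(x+iT)|\le\pi/2$. For the first term I would invoke the first lemma of this subsection, which gives $|\arg G(-y+iT)|\le(q(x,-y)+2)\pi$, where $q(x,-y)$ counts the zeros of $\Re G(u+iT)$ for $u\in(-y,x)$. The remaining two lemmas then chain to bound $q(x,-y)$: first $q(x,-y)\le n(x+y)$, then
\[
 q(x,-y)\le(x+y+2)\int_0^{x+y+2}\frac{n(r)}{r}\,dr,
\]
and finally $\int_0^{x+y+2} n(r)/r\,dr \ll \log T$. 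Since $x$ and $y$ are fixed, combining these gives $q(x,-y)\ll\log T$, and hence the stated bound $\Im\int_{R_3}\frac{G'}{G}\,ds \ll \log T$ follows.

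The substantive work has in fact already been carried out in the three preparatory lemmas — in particular the application of Jensen's theorem together with the convexity bounds for $\mu(\sigma)$ and the polynomial growth $\widetilde\zeta(x+(x+y+2)e^{i\theta}+iT)\ll T^L$, which produce the crucial $\log T$ savings. The only point requiring care here is the classical Backlund-style observation underlying the first lemma: between consecutive zeros of $\Re G$ the image point $G(u+iT)$ stays in an open half-plane, so the continuous argument varies by less than $\pi$ on each such subinterval, whence the total variation is controlled by $q(x,-y)+O(1)$. Given that bound, the present lemma is a direct assembly of the preceding estimates, and I expect no genuine obstacle beyond checking that the continuous branch of $\arg G$ is the one used consistently throughout the argument-counting.
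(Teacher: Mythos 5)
Your proposal is correct and follows essentially the same route as the paper: the paper likewise writes $\Im\int_{R_3}G'/G\,ds=[\arg G(\sigma+iT)]_{x}^{-y}$ and bounds it by the estimate $|\arg G(u+iT)|\ll\log T$ obtained by chaining the three preparatory lemmas exactly as you describe. No gaps.
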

\begin{proof}
 By above lemmas, we have 
 \begin{align} \label{sekibunmae}
  |\arg G(u+iT)| 
  \ll \log T
 \end{align}
 for $-y\le u\le x$.
 We have 
 \begin{align*}
  \Im\int_{R_3} \frac{ G'(s) }{ G(s) } ds
  &=\Im\int_{x}^{-y} \frac{ G'(\sigma+iT) }{ G(\sigma+iT) } d\sigma \\
  &=[\arg G(\sigma+iT)]_{x}^{-y} \\
  &\ll \log T. \qedhere
 \end{align*}
\end{proof}

\subsection{Calculations for $R_4$} 
\begin{lem} 
 Let $b_1,b_2>0$ and $y_1>y_2>0$. 
 For $s\in\mathbb{C}$ with $-y_1<\sigma<-y_2$ and sufficiently large $t$, we have 
 \[
  \frac{\zeta(b_1s)\zeta(b_2s)}{\zeta((b_1+b_2)s)} 
  \gg t^{1/2}.
 \]
\end{lem}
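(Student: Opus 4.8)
The plan is to reduce everything to the functional-equation-and-Stirling expansion already used in the proof of Lemma~\ref{3.2}, namely
\[
 \zeta(w)=2^{1/2+w}\pi^{w-1/2}e^{w-1}(1-w)^{1/2-w}\sin\!\left(\frac{\pi w}{2}\right)\zeta(1-w)\left(1+O\!\left(\frac{1}{|1-w|}\right)\right),
\]
applied with $w=b_1 s$, $w=b_2 s$, and $w=(b_1+b_2)s$. For each such $w=bs$ we have $\Re(w)=b\sigma$ bounded (since $-y_1<\sigma<-y_2$) and $\Im(w)=bt\to+\infty$, so I would first record the magnitude of every factor: $2^{1/2+w}$, $\pi^{w-1/2}$, $e^{w-1}$ are each bounded above and below; $|(1-w)^{1/2-w}|=|1-w|^{1/2-b\sigma}\exp(bt\,\arg(1-w))$ by Stirling; and $|\sin(\pi w/2)|\asymp\tfrac12 e^{\pi bt/2}$ for $t>0$. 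Since $\exp(bt\,\arg(1-w))$ and $e^{\pi bt/2}$ individually blow up or vanish, the crucial move is to keep them together as the single exponential $\exp\bigl(bt(\arg(1-w)+\pi/2)\bigr)$, exactly as in the proof of Theorem~\ref{main2}.

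The heart of the argument is then bookkeeping in the quotient. Writing $\arg(1-bs)+\pi/2=\arctan\frac{1-b\sigma}{bt}$, one has $bt(\arg(1-bs)+\pi/2)\to 1-b\sigma$ as $t\to\infty$, so the three combined exponentials contribute a factor tending to $\exp\bigl((1-b_1\sigma)+(1-b_2\sigma)-(1-(b_1+b_2)\sigma)\bigr)=e$, hence $\asymp 1$. The polynomial factors give
\[
 \frac{|1-b_1 s|^{1/2-b_1\sigma}\,|1-b_2 s|^{1/2-b_2\sigma}}{|1-(b_1+b_2)s|^{1/2-(b_1+b_2)\sigma}},
\]
and since $|1-bs|\asymp bt$ for large $t$, the exponent of $t$ here equals $(1/2-b_1\sigma)+(1/2-b_2\sigma)-(1/2-(b_1+b_2)\sigma)=1/2$; thus this block is $\asymp t^{1/2}$, with implied constants depending only on $b_1,b_2,y_1,y_2$. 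The bounded power factors in $2$, $\pi$, $e$ and the error terms $1+O(1/|1-bs|)\to 1$ are harmless.

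The one step that is not automatic, and which I expect to be the main obstacle, is controlling the three remaining factors $\zeta(1-bs)$ uniformly, in particular keeping the denominator factor $\zeta(1-(b_1+b_2)s)$ away from both $0$ and $\infty$. Here I would use that $\sigma<-y_2<0$ forces each shifted argument $1-bs$ to have real part $1-b\sigma\in(1+by_2,\,1+by_1)$, strictly bounded away from $1$. The Euler product then gives $|\zeta(1-bs)|\ge \zeta(2(1-b\sigma))/\zeta(1-b\sigma)>0$ and $|\zeta(1-bs)|\le\zeta(1-b\sigma)\le\zeta(1+by_2)<\infty$, uniformly for large $t$, so the quotient of the three $\zeta(1-bs)$-factors is $\asymp 1$. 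Multiplying the $t^{1/2}$ block by all the bounded blocks yields $\zeta(b_1 s)\zeta(b_2 s)/\zeta((b_1+b_2)s)\asymp t^{1/2}$, which in particular gives the claimed bound $\gg t^{1/2}$.
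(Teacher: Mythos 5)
Your proof is correct and follows essentially the same route as the paper's: apply the functional equation with Stirling's formula to each of $\zeta(b_1s)$, $\zeta(b_2s)$, $\zeta((b_1+b_2)s)$, show the combined exponential factors $\exp\bigl(bt(\arg(1-bs)+\pi/2)\bigr)$ contribute $\asymp 1$ (the paper only records the one-sided bound $\ge 1$ where you compute the limit $e$), and extract $t^{1/2}$ from the power factors via the exponent identity $(1/2-b_1\sigma)+(1/2-b_2\sigma)-(1/2-(b_1+b_2)\sigma)=1/2$. Your explicit Euler-product control of the $\zeta(1-bs)$ factors is a detail the paper absorbs silently into its $\asymp$ step, but it is the same argument.
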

\begin{proof}
Let $s_1:=b_1s$ and $s_2:=b_2s$.
 The proof is almost the same as the proof of Lemma \ref{3.2}.
 By letting
 \begin{align*}
  \alpha_1
  &:=\prod_{j=1}^{2} \left( |1-s_j|^{1/2-\sigma_j} \exp(t_j (\arg(1-s_j)+\pi/2)) \right), \\
  \alpha_2
  &:=|1-s_1-s_2|^{1/2-\sigma_1-\sigma_2} \exp((t_1+t_2) (\arg(1-s_1-s_2)+\pi/2)), 
 \end{align*}
 we have 
 \[
  \frac{\zeta(s_1)\zeta(s_2)}{\zeta(s_1+s_2)}
  \asymp \frac{ \alpha_1 }{ \alpha_2 }. 
 \]
 We note that
 \begin{align} \label{4441}
  \begin{split}
   \frac{ |1-s_1|^{1/2-\sigma_1} |1-s_2|^{1/2-\sigma_2} }{ |1-s_1-s_2|^{1/2-\sigma_1-\sigma_2} } 
   \asymp \frac{ t_1^{1/2-\sigma_1} t_2^{1/2-\sigma_2} }{ (t_1+t_2)^{1/2-\sigma_1-\sigma_2} }
   \asymp t^{1/2}.
  \end{split}
 \end{align}
 On the other hand, we have
 \begin{align} \label{4442}
  \begin{split}
   &\exp\left( t_1(\arg(1-s_1) +\pi/2) +t_2(\arg(1-s_2) +\pi/2) -(t_1+t_2)(\arg(1-s_1-s_2) +\pi/2) \right) \\
   &=\exp \biggl( 
    t_1\left( \arctan \left( \frac{ -t_1 }{ 1-\sigma_1 } \right) +\frac{ \pi }{ 2 } \right)
    +t_2\left( \arctan \left( \frac{ -t_2 }{ 1-\sigma_2 } \right) +\frac{ \pi }{ 2 } \right) \\
    &\qquad\quad -(t_1+t_2) \left( \arctan \left( \frac{ -t_1-t_2 }{ 1-\sigma_1-\sigma_2 } \right) +\frac{ \pi }{ 2 } \right) 
    \biggr) \\
   &=\exp \biggl( 
    t_1\arctan \left( \frac{ 1-\sigma_1 }{ t_1 } \right)
    +t_2\arctan \left( \frac{ 1-\sigma_2 }{ t_2 } \right) 
    -(t_1+t_2)\arctan \left( \frac{ 1-\sigma_1-\sigma_2 }{ t_1+t_2 } \right) 
    \biggr) \\
   &\ge1.
  \end{split}
 \end{align}
 By \eqref{4441} and \eqref{4442}, we get
 \begin{align*}
  &\frac{\zeta(s_1)\zeta(s_2)}{\zeta(s_1+s_2)}
  \asymp\frac{ \alpha_1 }{ \alpha_2 } 
  \gg t^{1/2}. \qedhere
 \end{align*}
\end{proof}

\begin{lem} \label{4443}
 Let $y_1>y_2>0$. 
 For $s\in\mathbb{C}$ with $-y_1<\sigma<-y_2$ and sufficiently large $t$, we have 
 \[
  \widetilde\zeta(s)
  = \prod_{j=1}^r \zeta (a_{j}s) (1+O(t^{-1/2})).
 \]
\end{lem}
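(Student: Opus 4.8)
The plan is to read off the main factor directly from Hoffman's formula stated in the introduction and then to show that every other summand is negligible by comparison. Among the set partitions of $\{1,\dots,r\}$, the partition into $r$ singletons is the unique one with $l=r$ blocks; for it the sign $(-1)^{r-l}$ equals $1$, the product $\prod_{j}(\#(P_j)-1)!$ equals $\prod_j 0!=1$, and the block $\{j\}$ contributes $\zeta(a_{j}s)$. Thus this term is exactly $\prod_{j=1}^{r}\zeta(a_{j}s)$, and Hoffman's formula becomes
\[
 \widetilde\zeta(s)=\prod_{j=1}^{r}\zeta(a_{j}s)
  +\sum_{l=1}^{r-1}\sum_{P_1,\dots,P_l}(-1)^{r-l}\prod_{k=1}^{l}(\#(P_k)-1)!\cdot\prod_{k=1}^{l}\zeta(b_{k}s),
\]
where $b_{k}:=\sum_{j'\in P_k}a_{j'}>0$. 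It therefore suffices to bound each summand of the double sum by $O(t^{-1/2})\prod_{j=1}^{r}\zeta(a_{j}s)$.

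Next I would fix a set partition with $l\le r-1$ blocks and compare $\prod_{k=1}^{l}\zeta(b_{k}s)$ with $\prod_{j=1}^{r}\zeta(a_{j}s)$ by repeatedly applying the previous lemma. Inside a block $P_k=\{j_1,\dots,j_m\}$ I would split off one index at a time: writing $\zeta(b_{k}s)=\zeta\bigl((a_{j_1}+c)s\bigr)$ with $c:=a_{j_2}+\cdots+a_{j_m}>0$, the previous lemma (valid since $-y_1<\sigma<-y_2$ and $t$ is large) gives $\zeta(a_{j_1}s)\zeta(cs)/\zeta(b_{k}s)\gg t^{1/2}$ in absolute value; iterating inside the block and telescoping the intermediate factors yields $\prod_{i=1}^{m}\zeta(a_{j_i}s)/\zeta(b_{k}s)\gg t^{(m-1)/2}$. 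Multiplying these relations over the $l$ blocks and using $\sum_{k}\#(P_k)=r$ gives
\[
 \frac{\prod_{j=1}^{r}\zeta(a_{j}s)}{\prod_{k=1}^{l}\zeta(b_{k}s)}\gg t^{(r-l)/2},
\]
so $\prod_{k=1}^{l}\zeta(b_{k}s)\ll t^{-(r-l)/2}\prod_{j=1}^{r}\zeta(a_{j}s)$, and $l\le r-1$ forces $(r-l)/2\ge1/2$.

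Finally I would sum the contributions. The number of set partitions of $\{1,\dots,r\}$ is a fixed constant, the factorial coefficients are bounded, and both the implied constants and the threshold for ``sufficiently large $t$'' in the previous lemma are uniform because $\sigma$ stays in the bounded interval $(-y_1,-y_2)$; hence the whole double sum is $O(t^{-1/2})\prod_{j=1}^{r}\zeta(a_{j}s)$. Since the functional-equation estimate used above shows $\prod_{j=1}^{r}\zeta(a_{j}s)\ne0$ for large $t$ in the strip, factoring it out gives $\widetilde\zeta(s)=\prod_{j=1}^{r}\zeta(a_{j}s)\bigl(1+O(t^{-1/2})\bigr)$.

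I expect the difficulty here to be organizational rather than analytic: the analytic core---that turning one $\zeta$ factor into two gains a power $t^{1/2}$, coming from the ratio of Stirling power factors in \eqref{4441} against the harmless exponential factor in \eqref{4442}---is already supplied by the previous lemma. The main thing to watch is that the telescoping splits accumulate to exactly $t^{(r-l)/2}$ for every partition and that all implied constants remain uniform in $\sigma$ across the finitely many terms.
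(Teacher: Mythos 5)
Your proposal is correct and is exactly the argument the paper intends: the paper's own proof consists of the single sentence ``By applying the previous lemma repeatedly, we find the result,'' and your write-up (Hoffman's partition formula, the all-singletons partition as the main term, and telescoping the ratio $\zeta(b_1s)\zeta(b_2s)/\zeta((b_1+b_2)s)\gg t^{1/2}$ within each block to gain $t^{(r-l)/2}$ over a partition with $l\le r-1$ blocks) is the detailed version of that same plan, with the uniformity issues correctly noted.
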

\begin{proof}
 By applying the previous lemma repeatedly, we find the result. 
\end{proof}

\begin{lem} \label{4444}
 For $b>0$, we have 
 \begin{align*}
  \Im \int_{-y+iT}^{-y+it_0} \frac{ \zeta'(bs) }{ \zeta(bs) } ds
  =T \log \frac{ bT }{ 2\pi } -T +O(\log T). 
 \end{align*}
\end{lem}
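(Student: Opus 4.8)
The plan is to reduce the statement to the classical Riemann--von Mangoldt count of the change of argument of $\zeta$ along a vertical line lying to the left of the critical strip. First I would substitute $w=bs$. Since $\int \frac{\zeta'(bs)}{\zeta(bs)}\,ds=\frac{1}{b}\log\zeta(bs)$, the imaginary part of the integral equals $\frac1b$ times the continuous variation of $\arg\zeta(w)$ as $w$ runs along the segment $\Re(w)=-by$ from $w=-by+ibT$ down to $w=-by+ibt_0$. This variation is well defined because $\zeta$ has no zeros with negative real part off the real axis, while the segment has $\Im(w)>0$ throughout and hence avoids the trivial zeros; thus $\log\zeta(w)$ admits a continuous branch on it.

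Next I would compute $\arg\zeta(-by+iv)$ for large $v$ from the functional-equation-plus-Stirling representation already recorded in the proof of Lemma~\ref{3.2}, namely $\zeta(w)=2^{1/2+w}\pi^{w-1/2}e^{w-1}(1-w)^{1/2-w}\sin(\pi w/2)\,\zeta(1-w)(1+O(1/|1-w|))$. Taking imaginary parts of the logarithm and inserting $w=-by+iv$, the linear-in-$v$ terms contribute $v\log(2\pi)+v$, the factor $(1-w)^{1/2-w}$ contributes $-v\log v+O(1)$ (using $\arg(1-w)=-\pi/2+O(1/v)$ and $\log|1-w|=\log v+O(1/v^2)$), the factor $\sin(\pi w/2)$ contributes $O(1)$ since its argument stays in a bounded range for fixed $\Re(w)$, and $\arg\zeta(1-w)=O(1)$ because $\Re(1-w)=1+by>1$ bounds $\arg\zeta(1-w)$ by $\log\zeta(1+by)$ via the Euler product. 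Collecting these gives $\arg\zeta(-by+iv)=-v\log\frac{v}{2\pi}+v+O(1)$.

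Finally I would evaluate at the two endpoints: at $v=bt_0$ the argument is $O(1)$, while at $v=bT$ it equals $-bT\log\frac{bT}{2\pi}+bT+O(1)$. Subtracting and multiplying by $1/b$ (taking into account the downward orientation of $R_4$) yields $T\log\frac{bT}{2\pi}-T+O(1)$, which is the asserted formula, with even a slightly sharper error than the stated $O(\log T)$.

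I expect the main obstacle to be the careful verification that the two bounded error contributions, the argument of $\sin(\pi w/2)$ and $\arg\zeta(1-w)$, really vary by only $O(1)$ along the whole segment, and that the continuous branch of $\arg\zeta$ agrees with the smooth asymptotic main term rather than differing from it by a multiple of $2\pi$ growing with $T$. Both points are handled by noting that neither $\sin(\pi w/2)$ nor $\zeta(1-w)$ vanishes or winds around the origin on the line $\Re(w)=-by$, so that their arguments stay bounded; the bookkeeping of the factor $1/b$ and of the orientation then affects only signs, not the size of the error.
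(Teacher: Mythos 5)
Your proof is correct and follows essentially the same route as the paper: rewrite the integrand via the functional equation together with Stirling's formula, track the continuous variation of the argument of each factor along the vertical segment, and note that only the $\Gamma$-type factor $(1-w)^{1/2-w}$ and the elementary exponentials contribute to the main term. The only (harmless) difference is that you substitute $w=bs$ at the outset and observe that $\arg\zeta(1-w)$ is $O(1)$ via the Euler product on $\Re(1-w)=1+by>1$, which even sharpens the paper's $O(\log T)$ error to $O(1)$.
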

\begin{proof}
 By the functional equation, we have
 \begin{align*}
  \int_{-y+iT}^{-y+it_0} \frac{ \zeta'(s) }{ \zeta(s) } ds
  &=[\log \zeta(s)]_{-y+iT}^{-y+it_0} \\
  &=\left[ \log\left(2^s\pi^{s-1}\sin\left(\frac{\pi s}{2}\right)\Gamma(1-s)\zeta(1-s)\right) \right]_{-y+iT}^{-y+it_0}. 
 \end{align*} 
 Here we have
 \begin{align*}
  \Im \left[ \log\left(2^s \pi^{s-1} \right) \right]_{-y+iT}^{-y+it_0}
  =-T \log 2\pi +O(1)
 \end{align*}
 and
 \begin{align*}
  \Im \left[ \log\sin \left( \frac{ \pi s }{ 2 } \right) \right]_{-y+iT}^{-y+it_0}
  &=-\Im \left( \log \left( -\frac{ 1 }{ 2i } \exp \left( \frac{ -i \pi (-y+iT) }{ 2 } \right) \right) \right) +O(1) \\
  &=O(1). 
 \end{align*}
 By the Stirling formula and \cite[Section 9.4]{Tit86}, we also have 
 \begin{align*}
  \Im \left[ \log \Gamma(1-s) \right]_{-y+iT}^{-y+it_0}
  &= -\Im \left( \log (e^{iT} (1+y-iT)^{1/2-y-iT}) \right) +O(1) \\
  &=-T+T\log T +O(1)
 \end{align*}
and
 \begin{align*}
  \Im \left[ \log\zeta\left(1-s\right) \right]_{-y+iT}^{-y+it_0}
  =O(\log T).
 \end{align*}
 By changing variable, we obtain the result. 
\end{proof}

\begin{lem} \label{mo3c}
 We have 
 \begin{align*}
  \Im \int_{R_4} \frac{ G'(s) }{ G(s) } ds
  =
  \begin{cases}
   \displaystyle{ T \sum_{j=1}^r a_j\log \frac{ a_jT }{ 2\pi e} +T\log M +O(\log T)} &(a=0), \\
   \displaystyle{ T \sum_{j=1}^r a_j\log \frac{ a_jT }{ 2\pi e} +O(\log T)} &(a\ne0). 
  \end{cases}
 \end{align*}
\end{lem}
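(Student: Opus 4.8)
The plan is to convert the integral into a variation of argument and then combine the factorization of $\widetilde\zeta$ from Lemma \ref{4443} with the single-zeta computation of Lemma \ref{4444}. Set $H(s):=\prod_{j=1}^r\zeta(a_js)$, so that $\frac{H'(s)}{H(s)}=\sum_{j=1}^r a_j\frac{\zeta'(a_js)}{\zeta(a_js)}$. Since $R_4$ is the segment running from $-y+iT$ down to $-y+it_0$, applying Lemma \ref{4444} with $b=a_j$ and summing gives
\begin{align*}
 \Im\int_{R_4}\frac{H'(s)}{H(s)}\,ds
 &=\sum_{j=1}^r a_j\left(T\log\frac{a_jT}{2\pi}-T\right)+O(\log T) \\
 &=T\sum_{j=1}^r a_j\log\frac{a_jT}{2\pi e}+O(\log T),
\end{align*}
where the last step uses $\sum_j a_j=A$ and $\log\frac{a_jT}{2\pi}-1=\log\frac{a_jT}{2\pi e}$. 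This already supplies the stated main term, so the task reduces to comparing $\Im\int_{R_4}\frac{G'}{G}\,ds$ with $\Im\int_{R_4}\frac{H'}{H}\,ds$.

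For the comparison I would pass to variation of argument, writing $\Im\int_{R_4}\frac{G'}{G}\,ds=[\arg G(s)]_{R_4}$ and likewise for $H$. Consider first $a\ne0$, where \eqref{note} gives $\frac{G'}{G}=\frac{\widetilde\zeta'}{\widetilde\zeta-a}$. The fixed phase $\arg(-a)$ drops out of the variation, so $[\arg G]_{R_4}=[\arg(\widetilde\zeta(s)-a)]_{R_4}$. Factoring $\widetilde\zeta(s)-a=H(s)\bigl(\widetilde\zeta(s)/H(s)-a/H(s)\bigr)$ and invoking Lemma \ref{4443}, the correction factor equals $1+O(t^{-1/2})-a/H(s)$; since $|H(s)|\to\infty$ as $t\to\infty$ on $\sigma=-y$, this factor tends to $1$ and its argument stays in a bounded arc for large $t$. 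Hence $[\arg(\widetilde\zeta-a)]_{R_4}=[\arg H]_{R_4}+O(1)$, which yields the $a\ne0$ formula.

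For $a=0$, \eqref{note} gives $\frac{G'}{G}=\frac{\widetilde\zeta'}{\widetilde\zeta}-\log M$. The constant contributes $\Im\int_{R_4}(-\log M)\,ds=\Im\bigl(-\log M\cdot i(t_0-T)\bigr)=T\log M+O(1)$, accounting for the extra $T\log M$. For the remaining piece I would write $\frac{\widetilde\zeta'}{\widetilde\zeta}-\frac{H'}{H}=\frac{d}{ds}\log\frac{\widetilde\zeta(s)}{H(s)}$, whose imaginary integral is $[\arg(\widetilde\zeta/H)]_{R_4}$; by Lemma \ref{4443} the ratio $\widetilde\zeta/H=1+O(t^{-1/2})$ hovers near $1$ for large $t$, so its argument variation there is $O(1)$.

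The hard part will be controlling these argument-variation errors rigorously, and it has two sources. First, Lemma \ref{4443} holds only for sufficiently large $t$, so on the bounded lower portion of $R_4$ one must bound the variation of $\arg(\widetilde\zeta/H)$ (resp. $\arg(\widetilde\zeta-a)$) directly; this is $O(1)$ because the functions involved are meromorphic there with only finitely many zeros and $a$-points. Second, splitting $\arg(\widetilde\zeta-a)$ as $\arg H+\arg(\cdots)$ (and likewise $\arg(\widetilde\zeta/H)$) requires $\widetilde\zeta$ and $H$ to be nonvanishing on the large-$t$ part of $R_4$, which I would justify via Theorem \ref{main2+} (no $a$-points, in particular no zeros, in the strip once $|t|$ is large). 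Once these bounded-variation estimates are in place, every correction is absorbed into $O(\log T)$ and both cases follow.
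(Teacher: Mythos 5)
Your proposal is correct and follows essentially the same route as the paper: the main term comes from summing Lemma \ref{4444} over $b=a_j$, the ratio $\widetilde\zeta/\prod_j\zeta(a_js)=1+O(t^{-1/2})$ from Lemma \ref{4443} contributes only a bounded argument variation, the constant $-\log M$ yields the extra $T\log M$ when $a=0$, and for $a\ne0$ the factor $1-a/\widetilde\zeta(s)$ is handled via the growth of $|\widetilde\zeta|$ on $\sigma=-y$ coming from the functional equation and Stirling. Your extra care about the bounded lower portion of $R_4$ and the nonvanishing needed to split arguments is a reasonable elaboration of details the paper leaves implicit.
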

\begin{proof}
 By Lemmas \ref{4443} and \ref{4444}, we have 
 \begin{align*}
  \Im \int_{R_4} \frac{ \widetilde\zeta'(s) }{ \widetilde\zeta(s) } ds 
  &=\Im \sum_{j=1}^r \int_{R_4} a_j \frac{ \zeta'(a_js) }{ \zeta(a_js) } ds 
   +[\arg (1+O(t^{-1/2}))]_{R_4} \\
  &=T \sum_{j=1}^r a_j\log \frac{ a_jT }{ 2\pi e} +O(\log T). 
 \end{align*}
 When $a=0$, by \eqref{note}, we have 
 \begin{align*}
  \Im \int_{R_4} \frac{ G'(s) }{ G(s) } ds 
  &=T \sum_{j=1}^r a_j\log \frac{ a_jT }{ 2\pi e} +T\log M +O(\log T). 
 \end{align*}
 When $a\ne 0$, by the functional equation, the Stirling formula, and \eqref{note}, 
 there exists $t'>t_0$ such that
 \begin{align*}
  \frac{ G'(s) }{ G(s) }
  &=\frac{ \widetilde\zeta'(s) }{ \widetilde\zeta(s) } \cdot \frac{ 1 }{ 1 -a/\widetilde\zeta(s) } \\
  &=\frac{ \widetilde\zeta'(s) }{ \widetilde\zeta(s) } (1+O(t^{-r/2-Ay})) 
 \end{align*}
 holds for $t\ge t'$. 
 We note that $\zeta'(s)/\zeta(s)\ll \log t$ by the functional equation and \cite[Lemma 2.1]{Ono17}, we have 
 \begin{align*}
  \Im \int_{R_4} \frac{ G'(s) }{ G(s) } ds 
  =T \sum_{j=1}^r a_j\log \frac{ a_jT }{ 2\pi e} +O(\log T). 
 \end{align*} 
 This finishes the proof. 
\end{proof}

\subsection{Proof of Theorems \ref{main2+} and \ref{main3}}
\begin{proof}[Proof of Theorem \ref{main2+}]
 By the functional equation, the Stirling formula, and Lemma \ref{4443}, we have 
 \[
  \widetilde\zeta(s)
  \gg t^{r/2-A	\sigma}
 \]
 for large $t$. 
 Then we have the result. 
\end{proof}

\begin{proof}[Proof of Theorem \ref{main3}]
 We note that
 \[
  \int_{R_1} \frac{ G'(s) }{ G(s) } ds
  =O(1)
 \]
 holds. 
 By Lemma \ref{22}, we have
 \begin{align*}
  \Im \int_{R_2} \frac{ G'(s) }{ G(s) } ds 
  &=\arg \left( G(x+iT) \right)
   -\arg \left( G(x+it_0) \right)=O(1). 
 \end{align*}
 Then, by Lemmas \ref{mo3b} and \ref{mo3c}, we find the result. 
\end{proof}

\section{Proof of Theorems \ref{main4} and \ref{main5}}
\begin{lem}
 We have
 \begin{align*}
  \lim_{X\rightarrow\infty} \int_{X+it_0}^{X+iT} \log |G(s)| ds
  =0.
 \end{align*}
\end{lem}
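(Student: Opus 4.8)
The plan is to show that $G(X+it)$ tends to $1$ uniformly for $t$ in the fixed compact interval $[t_0,T]$ as $X\to\infty$, so that $\log|G(X+it)|\to0$ uniformly and the integral over a segment of fixed length vanishes in the limit. First I would parametrize the contour by $s=X+it$, $t\in[t_0,T]$, so that $ds=i\,dt$ and
\[
 \int_{X+it_0}^{X+iT} \log|G(s)|\,ds
 = i\int_{t_0}^{T} \log|G(X+it)|\,dt.
\]

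Next I would appeal to Lemma \ref{22}, which gives $\widetilde\zeta(X+it)=BM^{X+it}(1+O(c^{X}))$ with $0<c<1$ once $a_rX>2$. The implied constant here is uniform in $t$: every estimate entering Lemmas \ref{21} and \ref{22} bounds terms of the shape $|n^{-a_js}|=n^{-a_j\sigma}$, which depend on $\sigma=X$ alone. In the case $a=0$ this is immediate, since $G(s)=\widetilde\zeta(s)/(BM^s)=1+O(c^{X})$, so $G(X+it)\to1$ uniformly on $[t_0,T]$.

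In the case $a\ne0$ the key observation is that $M=1/(1^{a_1}\cdots r^{a_r})<1$, because $r\ge2$ and $a_2,\dots,a_r>0$ force $1^{a_1}\cdots r^{a_r}>1$. Hence $|M^{s}|=M^{X}\to0$, so $\widetilde\zeta(X+it)=BM^{X+it}(1+O(c^{X}))\to0$ uniformly in $t$, and therefore $G(s)=1-\widetilde\zeta(s)/a\to1$ uniformly on $[t_0,T]$. In both cases $|G(X+it)-1|\to0$ uniformly, so for $X$ large $|G(X+it)|$ is bounded away from $0$ and $\log|G(X+it)|=\log|1+(G(X+it)-1)|=O(|G(X+it)-1|)\to0$ uniformly. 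Since $[t_0,T]$ has fixed finite length, the displayed integral tends to $0$.

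There is no serious obstacle here; the only points demanding care are the two just flagged. First, one must verify that the decay $\widetilde\zeta\to0$ actually holds in the $a\ne0$ case, which rests entirely on $M<1$ (equivalently, that the dominant Dirichlet term of $\widetilde\zeta$ decays as $\sigma\to\infty$). Second, one must confirm that the error $O(c^{\sigma})$ of Lemma \ref{22} is uniform in the bounded variable $t$, so that the convergence $\log|G|\to0$ is uniform and passage to the limit inside the integral is justified rather than merely pointwise.
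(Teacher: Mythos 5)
Your proposal is correct and follows essentially the same route as the paper's (much terser) proof: both reduce the integrand via Lemma \ref{22} to $\log|1+O(c^{X})|$ when $a=0$ and to $\log|1+O(M^{X})|$ when $a\ne0$ (using $M<1$), and then let $X\to\infty$ over the fixed-length segment. Your added remarks on the uniformity in $t$ of the error term and on why $\widetilde\zeta\to0$ in the $a\ne0$ case are details the paper leaves implicit, not a different method.
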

\begin{proof}
 By the definition of $G(s)$, we have
 \begin{align*}
  \int_{X+it_0}^{X+iT} \log |G(s)| ds
  =
  \begin{cases}
   \displaystyle{ \int_{X+it_0}^{X+iT} \log \left| 1+O(c^\sigma) \right| ds} &(a=0), \\
   \vspace{-0.5em} \\
   \displaystyle{ \int_{X+it_0}^{X+iT} \log \left| 1+O(M^\sigma) \right| ds} &(a\ne0), 
  \end{cases}
 \end{align*}
 where $c$ is the constant in Lemma \ref{22}. 
\end{proof}

\begin{lem} \label{52}
 We have
 \begin{align*}
  \int_{x+it_0}^{x+iT} \log |G(s)| ds 
  =O(1). 
 \end{align*}
\end{lem}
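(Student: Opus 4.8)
The plan is to capture the cancellation that the naive estimate $\log|G(x+it)|=\log|1+O(c^x)|=O(c^x)$ misses: integrating that bound only gives $O(c^x T)$, whereas the claim requires $O(1)$ with $x$ a fixed large constant. The saving comes from the fact that, on the vertical line $\sigma=x$, the function $\log G(s)$ is a generalized Dirichlet series all of whose frequencies are bounded away from $1$ (and in particular none equals $1$, so there is no constant term), so every term oscillates in $t$ and integrates to something $T$-independent.

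First I would record the Dirichlet-series structure. Writing out the defining series gives
\[
 \widetilde\zeta(s)=\sum_{\substack{1\le n_1<\cdots<n_r\\ \tau\in\mathfrak{S}_r}}\bigl(n_1^{a_{\tau(1)}}\cdots n_r^{a_{\tau(r)}}\bigr)^{-s},
\]
whose smallest frequency is $1/M=1^{a_1}\cdots r^{a_r}$ with multiplicity $B$, by the minimality exploited in the proof of Lemma \ref{22}. Dividing by $BM^s$ (case $a=0$) or forming $1-\widetilde\zeta(s)/a$ (case $a\ne0$) therefore yields an expansion $G(s)-1=\sum_j b_j\eta_j^{-s}$ in which every frequency satisfies $\eta_j\ge\nu$ for a fixed $\nu>1$; the gap $\nu>1$ holds because the integer constraint $n_1<\cdots<n_r$ forces the near-minimal configurations to be isolated, so the frequencies of $G(s)-1$ cannot accumulate at $1$. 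Since $|G(s)-1|=O(c^x)<1$ for $x$ large (Lemma \ref{22}), the principal logarithm is analytic on the segment and expands as $\log G(s)=\sum_{n\ge1}\frac{(-1)^{n-1}}{n}(G(s)-1)^n=\sum_m d_m\lambda_m^{-s}$, whose frequencies $\lambda_m$, being products of the $\eta_j$, all satisfy $\lambda_m\ge\nu>1$, and for $x$ large $\sum_m|d_m|\lambda_m^{-x}$ converges.

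Using $\log|G(s)|=\Re\log G(s)$ and $ds=i\,dt$ along the segment, I would integrate term by term, each term contributing
\[
 \int_{t_0}^{T}\lambda_m^{-x-it}\,dt=\lambda_m^{-x}\,\frac{e^{-it_0\log\lambda_m}-e^{-iT\log\lambda_m}}{i\log\lambda_m},
\]
of modulus at most $2\lambda_m^{-x}/\log\lambda_m\le(2/\log\nu)\lambda_m^{-x}$. Summing, the integral is bounded by $(2/\log\nu)\sum_m|d_m|\lambda_m^{-x}$, a constant depending only on the fixed $x$ and on $a,a_1,\dots,a_r$ but not on $T$, which is the asserted $O(1)$; the absolute convergence just established legitimizes the term-by-term integration. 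The main obstacle I anticipate is the first step, namely verifying cleanly that $G(s)-1$ is a genuine generalized Dirichlet series whose frequencies are bounded away from $1$, since it is precisely this spectral gap that keeps $1/\log\lambda_m$ bounded and turns the oscillatory integrals $\int_{t_0}^{T}\lambda_m^{-it}\,dt$ into the $T$-independent cancellation that defeats the trivial bound $O(c^x T)$.
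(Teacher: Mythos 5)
Your argument is correct, but it takes a genuinely different route from the paper's. The paper's proof applies Cauchy's theorem to the rectangle with vertices $x+it_0$, $X+it_0$, $X+iT$, $x+iT$ (legitimate because $|G(s)-1|<1/2$ for $\sigma\ge x$, so $\log G$ is analytic there): the far vertical side vanishes as $X\to\infty$ by the unnumbered lemma immediately preceding Lemma \ref{52}, so the integral over the segment from $x+it_0$ to $x+iT$ equals the difference of two horizontal integrals $\int_{x}^{\infty}\log|G(\sigma+it)|\,d\sigma$ at heights $t_0$ and $T$, each of which is $O\bigl(\int_x^\infty \xi'^{\sigma}\,d\sigma\bigr)=O(1)$ by Lemma \ref{22}. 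You instead stay on the vertical line and extract the $T$-independence from oscillation in $t$: you expand $\log G$ as a generalized Dirichlet series with all frequencies $\lambda_m\ge\nu>1$ and integrate termwise, each term contributing at most $2\lambda_m^{-x}/\log\nu$. Both arguments are sound, but note that the paper's needs only the uniform bound $G(s)=1+O(c^{\sigma})$ from Lemma \ref{22}, whereas yours needs the additional spectral-gap input $\nu>1$, which you assert somewhat informally. It does hold: for $a\ne0$ the gap is immediate since every frequency of $G-1=-\widetilde\zeta/a$ is at least $1/M=2^{a_2}\cdots r^{a_r}>1$; for $a=0$ one observes that each frequency $n_1^{a_{\tau(1)}}\cdots n_r^{a_{\tau(r)}}$ of $\widetilde\zeta$ is at least $n_r^{a_r}$, so only finitely many lie below any fixed bound, the frequencies cannot accumulate at the minimal one $1/M$, and after dividing by $BM^s$ the remaining frequencies are bounded away from $1$. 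I would spell that out if you use this route. What your approach buys is a bound obtained entirely on the line $\sigma=x$ by explicit cancellation, which would survive in situations where the contour cannot be pushed to the right; what the paper's buys is brevity.
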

\begin{proof}
 By Cauchy's theorem, we have
 \begin{align*}
  \int_{x+it_0}^{x+iT} \log |G(s)| ds 
  =\int_{x+it_0}^{\infty+it_0} \log |G(s)| ds
   -\int_{x+iT}^{\infty+iT} \log |G(s)| ds. 
 \end{align*}
 Since $\log (1+\xi)=O(|\xi|)$ for $\xi>-1/2$, we have 
 \begin{align*}
  \int_{x+it_0}^{\infty+it_0} \log |G(s)| ds
  &=\int_{x+it_0}^{\infty+it_0} \log \left| 1+O(\xi'^\sigma) \right| ds \\
  &=\int_{x+it_0}^{\infty+it_0} O( \xi'^\sigma ) ds \\
  &=O(1), 
 \end{align*}
 where $\xi'=c$ if $a=0$, otherwise $M$. 
 Similarly, we have
 \begin{align*}
  \int_{x+iT}^{\infty+iT} \log |G(s)| ds
  =O(1).  
 \end{align*}
 Thus we get the result. 
\end{proof}

\begin{lem} \label{rz}
 Let $y>0$. 
 For large $T$, we have 
 \begin{align*}
  \int_{t_0}^{T} \log|\zeta(-y+it)| dt
  &=\left( \frac{ 1 }{ 2 } +y \right) T\log T +O(T). 
 \end{align*}
\end{lem}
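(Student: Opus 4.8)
The plan is to reduce the estimate to Stirling's formula via the functional equation, exactly in the spirit of the argument computation carried out in Lemma \ref{4444}, but now tracking the real part of $\log\zeta$ rather than its imaginary part. Writing $s=-y+it$ and applying the functional equation
\[
 \zeta(s)=2^s\pi^{s-1}\sin\left(\frac{\pi s}{2}\right)\Gamma(1-s)\zeta(1-s),
\]
I would take $\log|\cdot|$ of both sides, so that $\log|\zeta(-y+it)|$ becomes the sum of the five terms $\log|2^s|$, $\log|\pi^{s-1}|$, $\log|\sin(\pi s/2)|$, $\log|\Gamma(1-s)|$, and $\log|\zeta(1-s)|$. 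The first two equal $-y\log 2$ and $(-y-1)\log\pi$, hence are bounded and contribute only $O(T)$ after integration; and since $1-s=1+y-it$ lies in the half-plane of absolute convergence $\Re(\cdot)>1$, the last term satisfies $\log|\zeta(1+y-it)|=O(1)$.

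The two terms that genuinely grow are the sine factor and the Gamma factor, and the heart of the argument is that their exponentially large parts cancel. For large $t$ one has $|\sin(\pi(-y+it)/2)|\sim\frac{1}{2}e^{\pi t/2}$, so $\log|\sin(\pi s/2)|=\pi t/2+O(1)$. On the other hand, applying Stirling's formula to $\Gamma(1+y-it)$, with $\arg(1+y-it)\to-\pi/2$, the real part of $(1/2+y-it)\log(1+y-it)-(1+y-it)$ gives $\log|\Gamma(1+y-it)|=(1/2+y)\log t-\pi t/2+O(1)$. Adding the two, the $\pm\pi t/2$ contributions cancel and I am left with
\[
 \log|\zeta(-y+it)|=\left(\frac{1}{2}+y\right)\log t+O(1).
\]

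Finally I would integrate this over $t\in[t_0,T]$, using $\int_{t_0}^{T}\log t\,dt=T\log T-T+O(1)$, to obtain $(1/2+y)T\log T+O(T)$, as claimed. I expect the only delicate point to be the exponential cancellation between the sine and Gamma factors: each contributes separately a term of size $\pm\pi t/2$, which would integrate to $O(T^2)$, so the estimate works only because these match exactly. Care is therefore needed to confirm that the error terms in Stirling's formula and in the asymptotic for $|\sin(\pi s/2)|$ are genuinely $O(1)$ uniformly over the relevant range of $t$, so that they survive integration as $O(T)$ and do not corrupt the main term.
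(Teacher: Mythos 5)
Your proposal is correct and follows essentially the same route as the paper: the paper writes $\zeta(s)=\chi(s)\zeta(1-s)$ with $\chi(s)=2^s\pi^{s-1}\sin(\pi s/2)\Gamma(1-s)$ and invokes the standard asymptotic $\log|\chi(s)|=(1/2-\sigma)\log|t/(2\pi)|+O(1/t)$, which is exactly the cancellation of the $\pm\pi t/2$ terms between the sine and Gamma factors that you verify by hand. Your explicit Stirling computation and the final integration $\int_{t_0}^T\log t\,dt=T\log T+O(T)$ match the paper's argument in substance.
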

\begin{proof}
 Let $\chi(s):=2^s\pi^{s-1}\sin\left(\pi s/2 \right)\Gamma(1-s)$. 
 Then we easily find 
 \[
  \log |\chi(s)|=\left( \frac{ 1 }{ 2 } -\sigma \right) \log\left| \frac{ t }{ 2\pi } \right| +O\left( \frac{ 1 }{ t } \right) 
 \]
 by the Stirling formula for fixed $\sigma$ and sufficiently large $t$. 
 Since $\log |\zeta(1+y-it)|=O(1)$, we have
 \begin{align*}
  \int_{t_0}^{T} \log|\zeta(-y+it)| dt
  &=\int_{t_0}^{T} \log|\chi(-y+it)| dt 
   +\int_{t_0}^{T} \log|\zeta(1+y-it)| dt \\
  &=\left( \frac{ 1 }{ 2 } +y \right) T\log T +O(T). \qedhere  
 \end{align*}
\end{proof}

\begin{lem} \label{54}
 Let $y>0$. 
 For large $T$, we have 
 \[
  \int_{t_0}^{T} \log|G(-y+it)| dt
  =\sum_{j=1}^r \left( \frac{ 1 }{ 2 } +a_j y \right) T\log T   
   +O(T).  
 \]
\end{lem}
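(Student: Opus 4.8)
The plan is to push everything onto the line $\sigma=-y$, strip off the harmless normalizing factors hidden in $G(s)$, and then reduce to repeated applications of Lemma \ref{rz} after a linear change of variables. First I would dispose of the definition of $G$. When $a=0$ we have $G(s)=\widetilde\zeta(s)/(BM^s)$, and since $|M^{-y+it}|=M^{-y}$ and $B$ are positive constants, $\log|G(-y+it)|=\log|\widetilde\zeta(-y+it)|+O(1)$, so the discrepancy integrates to $O(T)$. When $a\ne0$ we have $G(s)=(\widetilde\zeta(s)-a)/(-a)$; here the constant $-\log|a|$ again only contributes $O(T)$, and the key point is the growth bound $\widetilde\zeta(-y+it)\gg t^{r/2+Ay}$ obtained in the proof of Theorem \ref{main2+}. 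This gives $\widetilde\zeta(-y+it)-a=\widetilde\zeta(-y+it)\bigl(1+O(t^{-(r/2+Ay)})\bigr)$, whence $\log|\widetilde\zeta(-y+it)-a|=\log|\widetilde\zeta(-y+it)|+O(t^{-(r/2+Ay)})$; because $r\ge2$ forces the exponent to exceed $1$, this error integrates to $O(1)$. In both cases I therefore reduce to evaluating $\int_{t_0}^T\log|\widetilde\zeta(-y+it)|\,dt$ up to an error of $O(T)$.

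Next I would invoke Lemma \ref{4443}, which for large $t$ yields $\widetilde\zeta(-y+it)=\prod_{j=1}^r\zeta\bigl(a_j(-y+it)\bigr)\bigl(1+O(t^{-1/2})\bigr)$. Taking $\log|\cdot|$ gives $\log|\widetilde\zeta(-y+it)|=\sum_{j=1}^r\log|\zeta(-a_jy+ia_jt)|+O(t^{-1/2})$, and the error integrates to $O(T^{1/2})=O(T)$. For each fixed $j$ I would substitute $u=a_jt$, obtaining $\int_{t_0}^T\log|\zeta(-a_jy+ia_jt)|\,dt=\frac{1}{a_j}\int_{a_jt_0}^{a_jT}\log|\zeta(-a_jy+iu)|\,du$. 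Applying Lemma \ref{rz} with the parameter $a_jy$ in place of $y$ and upper limit $a_jT$ (valid since $a_jT$ is large when $T$ is), this equals $\frac{1}{a_j}\bigl((1/2+a_jy)(a_jT)\log(a_jT)+O(a_jT)\bigr)=(1/2+a_jy)T\log T+O(T)$, where $\log a_j$ is absorbed into $O(T)$ and the gap between the lower limits $a_jt_0$ and $t_0$ is a bounded-interval integral, hence $O(1)$. Summing over $j$ produces $\sum_{j=1}^r(1/2+a_jy)T\log T+O(T)$, which is the asserted formula.

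The main obstacle I anticipate is not the main-term bookkeeping but justifying that $\log|G(-y+it)|$ is genuinely integrable, i.e. that $\widetilde\zeta(-y+it)\ne a$ for all large $t$ on the line. This is exactly the content of Theorem \ref{main2+}, which supplies an $a$-point-free region $\{-y_1\le\sigma\le-y_2,\ |t|>C_3\}$ covering the relevant part of the line $\sigma=-y$; on the bounded segment $t_0\le t\le C_3$ any singularities of $\log|G|$ come from only finitely many zeros and are logarithmically integrable, contributing $O(1)$. A secondary care point is that both Lemma \ref{4443} and the growth bound hold only for sufficiently large $t$, so I would peel off an initial bounded segment (contributing $O(1)$) before invoking the asymptotics; likewise, on the vertical line $\Re(s)=-a_jy<0$ the function $\zeta$ has no zeros for $t>0$, so the substituted integrals have no interior singularities to worry about.
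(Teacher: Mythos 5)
Your proposal is correct and follows essentially the same route as the paper: reduce $\log|G(-y+it)|$ to $\log|\widetilde\zeta(-y+it)|$ (using the lower bound $\widetilde\zeta(-y+it)\gg t^{r/2+Ay}$ to handle the shift by $a$ when $a\ne0$, exactly as in Lemma \ref{mo3c}), factor via Lemma \ref{4443}, substitute $u=a_jt$, and apply Lemma \ref{rz}. Your extra care about the lower limits of integration, the $\log a_j$ terms, and the integrability of $\log|G|$ only makes explicit what the paper leaves implicit.
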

\begin{proof}
 When $a=0$, by Lemma \ref{4443}, we have
 \begin{align*}
  \int_{t_0}^{T} \log|G(-y+it)| dt
  &=\sum_{j=1}^r \int_{t_0}^{T} \log |\zeta (a_{j}(-y+it))| dt +O(T) \\
  &=\sum_{j=1}^r \frac{ 1 }{ a_j } \int_{a_j t_0}^{a_j T} \log |\zeta (-a_j y+it)| dt 
   +O(T). 
 \end{align*}
 By Lemma \ref{rz}, we have the result. 
 On the other hand, when $a\ne0$, we have 
 \begin{align*}
  \log | G(s) |
  &=\log |\widetilde\zeta(s)-a| +O(1) \\
  &=\log |\widetilde\zeta(s)| +\log \left|1- \frac{ a }{\widetilde\zeta(s) }\right| +O(1).
 \end{align*}
 By the similar argument in Lemma \ref{mo3c}, we also find the result.  
\end{proof}

\begin{prop} \label{prop55}
 Let $y>0$. 
 For large $T$, we have 
 \begin{align*}
  2\pi \sum_{\substack{ -y < \beta_a \\ 0<\gamma_a <T }} 
  \left( \beta_a +y \right)
  =\sum_{j=1}^{r} \left( \frac{ 1 }{ 2 }+a_jy \right) T\log T +O(T). 
 \end{align*}
\end{prop}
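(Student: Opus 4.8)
The plan is to apply Littlewood's lemma to $G(s)$ on the positively oriented rectangle with vertices $-y+it_0,\,x+it_0,\,x+iT,\,-y+iT$ — the same contour $R_1,R_2,R_3,R_4$ used in Section 4 — and to extract the main term from the left edge while bounding the other three edges by estimates already established.

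First I would recall that the zeros of $G$ are exactly the $a$-points $\rho_a=\beta_a+i\gamma_a$ of $\widetilde\zeta$. Littlewood's lemma (e.g.\ Titchmarsh, Section 9.9) then gives, for $G$ analytic on and within $[-y,x]\times[t_0,T]$,
\begin{align*}
 2\pi \sum_{\substack{ -y<\beta_a<x \\ t_0<\gamma_a<T }} (\beta_a+y)
 &=\int_{t_0}^{T} \log|G(-y+it)|\,dt -\int_{t_0}^{T} \log|G(x+it)|\,dt \\
 &\quad +\int_{-y}^{x} \arg G(\sigma+it_0)\,d\sigma -\int_{-y}^{x} \arg G(\sigma+iT)\,d\sigma.
\end{align*}
Before invoking it I would note that, by Theorem \ref{main1}, every $a$-point satisfies $\beta_a\le C_1<x$, so the upper constraint $\beta_a<x$ is automatic and the right edge $\sigma=x$ is free of $a$-points. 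I may also assume $t_0$ and $T$ are not ordinates of $a$-points (and, if convenient, that $y$ avoids an abscissa); isolated logarithmic singularities on the left edge are harmless, being integrable.

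Next I would evaluate the four boundary contributions. The left edge is precisely Lemma \ref{54}, yielding $\sum_{j=1}^{r}(\tfrac12+a_jy)T\log T+O(T)$, the desired main term. The right edge is $O(1)$ by Lemma \ref{52} (noting $ds=i\,dt$ there, so $\int_{t_0}^{T}\log|G(x+it)|\,dt=O(1)$). The bottom integral $\int_{-y}^{x}\arg G(\sigma+it_0)\,d\sigma$ is $O(1)$, since $t_0$ is fixed and $\arg G$ is bounded on this compact zero-free segment. For the top edge, the bound \eqref{sekibunmae}, namely $|\arg G(\sigma+iT)|\ll\log T$ for $-y\le\sigma\le x$, gives $\int_{-y}^{x}\arg G(\sigma+iT)\,d\sigma\ll(x+y)\log T=O(\log T)$.

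Finally I would reconcile the height cutoff: the proposition sums over $0<\gamma_a<T$, whereas the rectangle starts at $t_0$, but the region $-y<\beta_a\le C_1$, $0<\gamma_a\le t_0$ is bounded and hence contains only finitely many $a$-points, each contributing $\beta_a+y=O(1)$; switching the lower cutoff thus alters the sum by $O(1)$. Collecting everything yields $2\pi\sum(\beta_a+y)=\sum_{j=1}^{r}(\tfrac12+a_jy)T\log T+O(T)$. I expect no serious obstacle: the sole nontrivial ingredient, the left-edge asymptotic, is already supplied by Lemma \ref{54}, so the work reduces to the careful bookkeeping of signs and orientation in Littlewood's lemma and to checking that the top-edge estimate \eqref{sekibunmae} is exactly what keeps the error within $O(T)$ — the one place where a weaker bound would spoil the result.
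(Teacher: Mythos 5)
Your proposal is correct and follows essentially the same route as the paper: Littlewood's lemma applied to $G(s)$ on the rectangle $[-y,x]\times[t_0,T]$, with the main term coming from Lemma \ref{54} on the left edge, the right edge handled by Lemma \ref{52}, the top edge by \eqref{sekibunmae}, and the bottom edge bounded trivially. The extra bookkeeping you include (the cutoff $t_0$ versus $0$, and the absence of $a$-points for $\sigma\ge x$ via Theorem \ref{main1}) is consistent with, and slightly more explicit than, the paper's argument.
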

\begin{proof}
 Applying Littlewood's lemma (for details, see \cite[Section 3.8]{Tit39}) 
 to the function $G(s)$ in the rectangle 
 $\{ s \mid -y \le \sigma \le x, t_0 \le t \le T \}$, we have 
 \begin{align*}
  2\pi \sum_{\substack{ -y < \beta_a \\ 0<\gamma_a <T }} 
   ( \beta_a +y ) 
  &=-\int_{t_0}^{T} \log|G(x+it)| dt 
   +\int_{t_0}^{T} \log|G(-y+it)| dt \\
  &\quad +\int_{-y}^{x} \arg G(\sigma+iT) d\sigma 
   -\int_{-y}^{x} \arg G(\sigma+it_0) d\sigma.
 \end{align*}
 We easily see
 \begin{align} \label{eq1}
  \int_{-y}^{x} \arg G(\sigma+it_0) d\sigma=O(1).  
 \end{align}
 By \eqref{sekibunmae}, we have 
 \begin{align} \label{eq2}
  \int_{-y}^{x} \arg G(\sigma+iT) d\sigma
  &\ll \log T. 
 \end{align}
 Then, by Lemmas \ref{52} and \ref{54}, we obtain the result. 
\end{proof}

\begin{proof}[Proof of Theorem \ref{main4}]
 We note that 
 \begin{align*}
  2\pi \sum_{\substack{ -y < \beta_a \\ 0<\gamma_a <T }} 
  \left( \beta_a -\frac{ 1 }{ 2 } \right)
  &=  2\pi \sum_{\substack{ -y < \beta_a \\ 0<\gamma_a <T }} 
  \left( \beta_a +y \right)
  -2\pi \left( y+\frac{ 1 }{ 2 }\right) N_{y}(a;T), \\
  2\pi \sum_{\substack{ -y < \beta_a \\ 0<\gamma_a <T }} 
  \left( \beta_a -\frac{ r }{ 2A } \right)
  &=  2\pi \sum_{\substack{ -y < \beta_a \\ 0<\gamma_a <T }} 
  \left( \beta_a +y \right)
  -2\pi \left( y+\frac{ r }{ 2A }\right) N_{y}(a;T). 
 \end{align*}
 By Theorem \ref{main3} and Proposition \ref{prop55}, we obtain the result. 
\end{proof}

\begin{lem}
 For $\sigma\ge 1/2$ and $0\le\alpha\le2$, we have 
 \[
  \int_{t_0}^{T} |\zeta(\sigma+it)|^\alpha dt \ll T\log T. 
 \]
\end{lem}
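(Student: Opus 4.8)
The plan is to reduce the general exponent $0\le\alpha\le2$ to the classical second moment $\alpha=2$ by H\"older's inequality, and then to invoke a mean square bound for $\zeta$ that is uniform in $\sigma\ge1/2$. The endpoint $\alpha=0$ is trivial, the integrand being $1$ and the integral $T-t_0\ll T\log T$. For $0<\alpha\le2$ I would apply H\"older with conjugate exponents $2/\alpha$ and $2/(2-\alpha)$ to get
\[
 \int_{t_0}^{T} |\zeta(\sigma+it)|^\alpha\,dt
 \le \left( \int_{t_0}^{T} |\zeta(\sigma+it)|^2\,dt \right)^{\alpha/2} (T-t_0)^{1-\alpha/2}.
\]
Granting $\int_{t_0}^{T}|\zeta(\sigma+it)|^2\,dt\ll T\log T$, the right-hand side is $\ll (T\log T)^{\alpha/2}T^{1-\alpha/2}=T(\log T)^{\alpha/2}\ll T\log T$, where the final step uses $\alpha/2\le1$. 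This gives the lemma uniformly in $\alpha$.

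It remains to prove the second moment bound uniformly for $\sigma\ge1/2$. For $\sigma\ge2$ it is immediate from $|\zeta(\sigma+it)|\le\zeta(2)$. For $1/2\le\sigma\le2$ I would use the uniform approximation $\zeta(\sigma+it)=\sum_{n\le t}n^{-\sigma-it}+O(t^{-\sigma})$, which reduces the mean square to that of a Dirichlet polynomial (after the standard treatment of the $t$-dependent cutoff). By the Montgomery--Vaughan mean value theorem,
\[
 \int_{t_0}^{T} \Bigl| \sum_{n\le T} n^{-\sigma-it} \Bigr|^2 dt
 = \sum_{n\le T} n^{-2\sigma}\bigl(T+O(n)\bigr)
 \ll T\sum_{n\le T} n^{-2\sigma} + \sum_{n\le T} n^{1-2\sigma}.
\]
Since $\sum_{n\le T}n^{-2\sigma}\le\sum_{n\le T}n^{-1}\ll\log T$ and $\sum_{n\le T}n^{1-2\sigma}\le T$ hold simultaneously for every $\sigma\ge1/2$, the right-hand side is $\ll T\log T$. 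Alternatively one may simply quote the uniform mean square theorem of \cite[Chapter 7]{Tit86}.

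The only delicate point, and the place I expect to be the main obstacle, is the uniformity as $\sigma\downarrow1/2$. The fixed-$\sigma$ asymptotic $\int_{t_0}^{T}|\zeta(\sigma+it)|^2\,dt\sim\zeta(2\sigma)T$ has a constant that diverges at the critical line, so one cannot glue together the pointwise-in-$\sigma$ asymptotics to obtain a bound valid on all of $\sigma\ge1/2$; on the line $\sigma=1/2$ itself the true size is the larger quantity $\sim T\log T$. The Montgomery--Vaughan computation above makes the uniformity transparent, because the potentially divergent sum $\sum_{n\le T}n^{-2\sigma}$ is dominated by the harmonic sum $\sum_{n\le T}n^{-1}\ll\log T$ across the entire range at once. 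With this single uniform estimate secured, the H\"older reduction closes the argument.
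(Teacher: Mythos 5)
Your proposal is correct and follows essentially the same route as the paper: the endpoint $\alpha=0$ is handled trivially, and the case $0<\alpha\le2$ is reduced to the second moment via H\"older's inequality with exponents $2/\alpha$ and $2/(2-\alpha)$, exactly as in the paper (where the conjugate exponent is written as $\beta$ with $\alpha/2+1/\beta=1$). The only difference is that the paper simply cites \cite[Section 7.2]{Tit86} for the mean square bound, whereas you additionally sketch a self-contained, uniform-in-$\sigma$ proof of it; that extra care about uniformity as $\sigma\downarrow1/2$ is sound but not needed beyond the citation.
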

\begin{proof}
 The lemma is trivial when $\alpha=0$.
 When $\alpha=2$, see \cite[Section 7.2]{Tit86}. 
 As for the case $0<\alpha<2$, let $\beta$ satisfy $\alpha/2+1/\beta=1$. 
 By H\"{o}lder's inequality, we have 
 \begin{align*}
  \int_{t_0}^{T} |\zeta(\sigma+it)|^\alpha dt 
  &\le \left( \int_{t_0}^{T} |\zeta(\sigma+it)|^2 dt \right)^{\alpha/2} \left( \int_{t_0}^{T} dt \right)^{1/\beta} \\
  &\ll (T\log T)^{\alpha/2} T^{1/\beta}. \qedhere
 \end{align*} 
\end{proof}

\begin{prop} \label{prop56} 
 Let $y_3=1/(2a_r)$. 
 We have
 \[
  2\pi \sum_{\substack{ y_3 < \beta_a \\ 0<\gamma_a <T }} 
  \left( \beta_a -y_3 \right)
  \ll T\log\log T. 
 \]
\end{prop}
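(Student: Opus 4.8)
The plan is to mirror the proof of Proposition \ref{prop55}, applying Littlewood's lemma to $G(s)$ but now placing the left edge of the rectangle on the conjectural critical line $\sigma=y_3=1/(2a_r)$ instead of at $\sigma=-y$. Concretely, applying Littlewood's lemma to $G(s)$ in the rectangle $\{s\mid y_3\le\sigma\le x,\ t_0\le t\le T\}$ gives
\begin{align*}
 2\pi \sum_{\substack{ y_3 < \beta_a \\ 0<\gamma_a <T }} (\beta_a - y_3)
 &= -\int_{t_0}^{T}\log|G(x+it)|\,dt + \int_{t_0}^{T}\log|G(y_3+it)|\,dt \\
 &\quad + \int_{y_3}^{x}\arg G(\sigma+iT)\,d\sigma - \int_{y_3}^{x}\arg G(\sigma+it_0)\,d\sigma.
\end{align*}
By Lemma \ref{52} the first integral is $O(1)$, and exactly as in \eqref{eq1} and \eqref{eq2} (restricting to the subinterval $[y_3,x]$) the two argument integrals contribute $O(1)$ and $O(\log T)$ respectively. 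Since the left-hand side is a sum of nonnegative terms, it therefore suffices to bound $\int_{t_0}^{T}\log|G(y_3+it)|\,dt$ from above by $O(T\log\log T)$.

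The point of the choice $y_3=1/(2a_r)$ is that at $\sigma=y_3$ every zeta factor produced by Hoffman's formula has real part at least $1/2$. Indeed, expanding $\widetilde\zeta(s)$ via Hoffman's formula as a signed sum over set partitions and applying the triangle inequality bounds $|\widetilde\zeta(y_3+it)|$ by a finite sum of products $\prod_{j}|\zeta(b_j(y_3+it))|$, where each $b_j$ is a sum of some of the $a_{j'}$ and hence $b_j\ge a_r$; consequently $b_j y_3\ge 1/2$ for every block. I would fix a small exponent, say $\alpha=2/r$, so that for each partition into $l\le r$ blocks one has $\alpha l\le 2$. Using $\alpha\le 1$ to bound $|\widetilde\zeta|^\alpha\ll\sum_{\text{partitions}}\prod_{j}|\zeta(b_j(y_3+it))|^\alpha$, applying H\"older's inequality with all exponents equal to $l$ to each product, and changing variables $u=b_j t$, the preceding lemma applies (since $b_j y_3\ge 1/2$ and $\alpha l\le 2$, with $a_r\le b_j\le A$ bounded) and gives $\int_{t_0}^{T}|\zeta(b_j(y_3+it))|^{\alpha l}\,dt\ll T\log T$ for each factor. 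Multiplying the $l$ factors yields $\int_{t_0}^{T}|\widetilde\zeta(y_3+it)|^\alpha\,dt\ll T\log T$.

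Finally I would pass from this $L^\alpha$-bound to the logarithmic integral by Jensen's inequality. When $a=0$ one has $\log|G(y_3+it)|=\log|\widetilde\zeta(y_3+it)|+O(1)$, since $|M^{y_3+it}|=M^{y_3}$ is constant in $t$; when $a\ne0$ one has $\log|G(y_3+it)|=\log|\widetilde\zeta(y_3+it)-a|+O(1)$, and $\int_{t_0}^{T}|\widetilde\zeta-a|^\alpha\,dt\ll T\log T$ as well because $|\widetilde\zeta-a|^\alpha\le|\widetilde\zeta|^\alpha+|a|^\alpha$. In either case, by concavity of $\log$ applied to the probability measure $dt/(T-t_0)$,
\[
 \frac{1}{T-t_0}\int_{t_0}^{T}\log|\widetilde\zeta(y_3+it)|^\alpha\,dt
 \le \log\!\left(\frac{1}{T-t_0}\int_{t_0}^{T}|\widetilde\zeta(y_3+it)|^\alpha\,dt\right)
 \le \log(C\log T),
\]
so that $\int_{t_0}^{T}\log|G(y_3+it)|\,dt=\tfrac1\alpha\int_{t_0}^{T}\log|\widetilde\zeta|^\alpha\,dt+O(T)\ll T\log\log T$, which is the claim. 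The main obstacle is the middle step, producing $\int_{t_0}^{T}|\widetilde\zeta(y_3+it)|^\alpha\,dt\ll T\log T$: this is exactly what forces the value $y_3=1/(2a_r)$ (so that all arguments sit on or to the right of the critical line) and the smallness of $\alpha$ (so every partition in Hoffman's formula stays within the exponent range $[0,2]$ of the preceding lemma); the $a\ne0$ case requires the extra observation that subtracting $a$ does not spoil the $L^\alpha$-bound.
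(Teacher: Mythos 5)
Your proposal is correct and follows essentially the same route as the paper: Littlewood's lemma on the rectangle with left edge at $\sigma=y_3$, Hoffman's formula to reduce to zeta factors with real part $\ge 1/2$, a small exponent $\alpha$ with $\alpha l\le 2$ so that H\"older's inequality and the lemma $\int_{t_0}^{T}|\zeta(\sigma+it)|^{\alpha}dt\ll T\log T$ apply, and Jensen's inequality to convert the resulting $T\log T$ moment bound into a $T\log\log T$ bound for the logarithmic integral. The only (immaterial) difference is bookkeeping: you first establish $\int_{t_0}^{T}|\widetilde\zeta(y_3+it)|^{\alpha}dt\ll T\log T$ and apply Jensen once, whereas the paper works inside the logarithm from the start, using $\sum_j x_j\le\prod_j(x_j+1)$ and $(X+1)^u\le X^u+1$ before applying Jensen to each partition term separately.
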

\begin{proof}
 We prove the proposition only for $a\ne0$.
 We can prove the case $a=0$ similarly. 
 Since $\sum_{j} x_j \le \prod_{j} (x_j+1)$ for $x_j \ge0$, there exists a constant $C$ such that 
 \begin{align*}
  &\int_{t_0}^{T} \log |\widetilde\zeta(y_3+it)-a| dt \\
  &\le \int_{t_0}^{T} \log 
   \left( 
    C \sum_{l=1}^{r} \sum_{P_1,\dots,P_l} \prod_{k=1}^{l} 
    \left| \zeta \left( \sum_{a_j\in P_k} a_j(y_3+it) \right) \right| +|a| 
   \right) dt \\
  &\le \sum_{l=1}^{r} \sum_{P_1,\dots,P_l} \frac{ 1 }{ u } \int_{t_0}^{T} \log 
   \left( \left(
    C \prod_{k=1}^{l} \left| \zeta \left( \sum_{a_j\in P_k} a_j(y_3+it) \right) \right| +1 
   \right)^u \right) dt 
    +\int_{t_0}^{T} \log(|a|+1) dt 
 \end{align*}
 for $0<u<2/r$. 
 Since $(X+1)^u\le X^u+1$, we have 
 \begin{align*}
  &\int_{t_0}^{T} \log |\widetilde\zeta(y_3+it)-a| dt \\
  &\le \sum_{l=1}^{r} \sum_{P_1,\dots,P_l} \frac{ 1 }{ u } \int_{t_0}^{T} \log 
   \left( 
    C^u \prod_{k=1}^{l} \left| \zeta \left( \sum_{a_j\in P_k} a_j (y_3+it) \right) \right|^u +1 
   \right) dt +O(T). 
 \end{align*}
 By Jensen's inequality, we have
 \begin{align*}
  &\int_{t_0}^{T} \log |\widetilde\zeta(y_3+it)-a| dt \\
  &\le \sum_{l=1}^{r} \sum_{P_1,\dots,P_l} \frac{ T-t_0 }{ u } \log 
   \left( 
    \frac{ 1 }{ T-t_0 } \int_{t_0}^{T} \left( C^u \prod_{k=1}^{l} \left| \zeta \left( \sum_{a_j\in P_k} a_j(y_3+it) \right) \right|^u +1 
   \right) dt \right) +O(T).
 \end{align*} 
 Since $0<ul<2$, by H\"{o}lder's inequality and the previous lemma, we have 
 \begin{align*}
  \int_{t_0}^{T} \prod_{k=1}^{l} \left| \zeta \left( \sum_{a_j\in P_k} a_j(y_3+it) \right) \right|^u dt
  &\le \prod_{k=1}^{l} \left( \int_{t_0}^{T} \left| \zeta \left( \sum_{a_j\in P_k} a_j(y_3+it) \right) \right|^{ul} dt \right)^{1/l} \\
  &\le T\log T. 
 \end{align*}
 Thus we get
 \[
  \int_{t_0}^{T} \log |\widetilde\zeta(y_3+it)-a| dt 
  \ll T\log\log T. 
 \]
 By Littlewood's lemma, we have the result.  
\end{proof}

\begin{proof}[Proof of Theorem \ref{main5}]
 By Proposition \ref{prop56}, we have 
 \begin{align*}
  \sum_{\substack{ \beta_a-y_3>\delta \\ 0<\gamma_a <T }} 1
  &\le \sum_{\substack{ \beta_a-y_3>\delta \\ 0<\gamma_a <T }} \frac{ \beta_a-y_3 }{ \delta } \\
  &\le \frac{ 1 }{ \delta } \sum_{\substack{ y_3 < \beta_a \\ 0<\gamma_a <T }} (\beta_a-y_3) \\
  &\ll \frac{ T }{ \delta } \log \log T.
 \end{align*}
 Putting $\delta=(\log \log T)^2/\log T$, we obtain the result. 
\end{proof}



\begin{thebibliography}{99}
\bibitem{AET01}
S. Akiyama, S. Egami, and Y. Tanigawa, 
\emph{Analytic continuation of multiple zeta-functions and their values at non-positive integers},
Acta Arithmetica \textbf{98} (2001), 107--116.

\bibitem{Ber70} 
B. C. Berndt, 
\textit{The number of zeros for $\zeta^{(k)}(s)$}, 
J. Lond.\ Math.\ Soc.\ \textbf{2} (1970), 577--580.

\bibitem{BL14} 
H. Bohr and E. Landau, 
\textit{Ein Satz \"uber Dirichletsche Reihen mit Anwendung auf die $\zeta$-Funktion und die $L$-Funktion}, 
Rendiconti del Circolo Matematico di Palermo \textbf{37} (1914), 269--272.

\bibitem{BLL13} 
H. Bohr, E. Landau, and J. E. Littlewood, 
\textit{Sur la fonction $\zeta(s)$ dans le voisinage de la droite $\sigma=\frac{1}{2}$}, 
Bull. de l'Acad. royale de Belgique (1913), 3--35.

\bibitem{Hof92} 
M. E. Hoffman, 
\textit{Multiple harmonic series},
Pacific J. Math.\ \textbf{152} (1992), 275--290.

\bibitem{IM} 
S. Ikeda and K. Matsuoka, 
\textit{On the zeros of the multiple zeta function},
preprint.

\bibitem{IMN} 
S. Ikeda, K. Matsuoka, and Y. Nagata,
\textit{On certain mean values of the double zeta-function},
Nagoya Math.\ J.\ \textbf{217} (2015), 161–190.

\bibitem{Kam06} 
K. Kamano, 
\textit{The multiple Hurwitz zeta function and a generalization of Lerch's formula}, 
Tokyo J. Math.\ \textbf{29} (2006), 61--73.

\bibitem{Lev75} 
N. Levinson, 
\textit{Almost all roots of $\zeta(s)=a$ are arbitrarily close to $\sigma=1/2$}, 
Proc.\ Nat.\ Acad.\ Sci.\ USA \textbf{72} (1975), 1322--1324.

\bibitem{LM74} 
N. Levinson and H. L. Montgomery, 
\textit{Zeros of the derivatives of the Riemann zeta-function}, 
Acta Math.\ \textbf{133} (1974), 49--65.

\bibitem{Mat02}
K. Matsumoto, 
\textit{On the analytic continuation of various multiple zeta-functions}, 
 in Number Theory for the Millennium (Urbana, 2000), Vol. II, M. A. Bennett et.\ al.\ (eds.), A. K. Peters, Natick, MA, (2002), 417--440.

\bibitem{MS14}
K. Matsumoto and M. Sh\={o}ji, 
\textit{Numerical Computations on the Zeros of the Euler Double Zeta-function I}
Mosc.\ J. Comb.\ Number Theory \textbf{4} (2014), 21--39. 

\bibitem{MS20}
K. Matsumoto and M. Sh\={o}ji, 
\textit{Numerical Computations on the Zeros of the Euler Double Zeta-function II}
Eur.\ J. Math.\ (to appear). 

\bibitem{MT15}
K. Matsumoto and H. Tsumura, 
\textit{Mean value theorems for the double zeta-function}
J.\ Math.\ Soc.\ Japan \textbf{67} (2015), 383--406. 

\bibitem{NP16}
T. Nakamura and \L. Pa\'{n}kowski, 
\textit{On complex zeroes off the critical line for non-monomial polynomial of zeta functions}, 
Math.\ Z. \textbf{284} (2016), 23--39. 

\bibitem{Ono17} 
T. Onozuka,
\textit{On the $a$-points of derivatives of the Riemann zeta function}, 
Eur.\ J. Math.\ \textbf{3} (2017), 53--76. 

\bibitem{Tit39} 
E. C. Titchmarsh, 
\textit{The theory of functions}, second ed., 
Oxford Univ.\ Press, 1939.

\bibitem{Tit86} 
E. C. Titchmarsh, 
\textit{The theory of the Riemann zeta-function}, second ed. (revised by D. R. Heath-Brown), 
Oxford Univ.\ Press, 1986.

\bibitem{Zha00}
J. Zhao, 
\emph{Analytic continuation of multiple zeta functions},
Proc.\ Amer.\ Math.\ Soc.\ \textbf{128} (2000), 1275--1283.

\end{thebibliography}
\end{document}